\theoremstyle{plain}
\newtheorem{theorem}{Theorem}[section]
\newtheorem{lemma}[theorem]{Lemma}
\newtheorem{prop}[theorem]{Proposition}
\theoremstyle{remark}
\newtheorem{remark}[theorem]{Remark}%[section]
\theoremstyle{plain}
\theoremstyle{plain}
\theoremstyle{plain}
\theoremstyle{plain}
\theoremstyle{plain}
\theoremstyle{plain}
\newcommand{\lb}{\overline{\mathcal{L}}}   %arithmetic line bundle
\newcommand{\ob}{\overline{\mathcal{O}}}
\newcommand{\eb}{\overline{\mathcal{E}}}
\newcommand{\fb}{\overline{\mathcal{F}}}
\newcommand{\ab}{\overline{\mathcal{A}}}
\newcommand{\mb}{\overline{\mathcal{M}}}
\newcommand{\CL}{{\mathcal{L}}}
\newcommand{\CO}{{\mathcal{O}}}
\newcommand{\CE}{{\mathcal{E}}}
\newcommand{\CX}{{\mathcal{X}}}
\newcommand{\CM}{{\mathcal{M}}}
\newcommand{\mbar}{\overline{M}}            %normed module
\newcommand{\CC}{{\mathbb{C}}}
\newcommand{\QQ}{{\mathbb{Q}}}
\newcommand{\RR}{{\mathbb{R}}}
\newcommand{\ZZ}{{\mathbb{Z}}}
\newcommand{\hhat}{\widehat h^0}                %arithmetic global section
\newcommand{\Hhat}{\widehat H^0}
\newcommand{\hsefhat}{\widehat h^0_{\rm sef}}
\newcommand{\Hsefhat}{\widehat H^0_{\rm sef}}
\newcommand{\Spec}{\mathrm{Spec}}
\newcommand{\vol}{\mathrm{vol}}             % volume
\newcommand{\volhat}{\widehat{\mathrm{vol}}}             % arithmetic volume
\newcommand{\dvol}{\mathrm{dvol}}             % derivative of volume
\newcommand{\deghat}{\widehat{\deg}}
\newcommand{\Pichat}{\widehat{\mathrm{Pic}}}
\newcommand{\PPichat}{\widehat{\mathcal{P}{ic}}}
\newcommand{\NNefhat}{\widehat{\mathcal{N}{ef}}}
\newcommand{\BBighat}{\widehat{\mathcal{B}{ig}}}
\newcommand{\EEffhat}{\widehat{\mathcal{E}{ff}}}
\begin{document}
\title{Effective Bounds of Linear Series
on Algebraic Varieties and Arithmetic Varieties}
\author{Xinyi Yuan, Tong Zhang}

%\address{Department of Mathematics, University of California, Berkeley, CA 94720, U.S.A.}
%\email{yxy@math.berkeley.edu}

%\address{Department of Mathematics, University of Alberta, Edmonton, Alberta T6G 2G1, Canada}
%\email{tzhang5@ualberta.ca}

\maketitle

\tableofcontents

\section{Introduction}

In this paper, we prove effective upper bounds for effective sections of line bundles on projective varieties and hermitian line bundles on arithmetic varieties in terms of intersection numbers. They are effective versions of the Hilbert--Samuel formula and the arithmetic Hilbert--Samuel formula. The treatments are high-dimensional generalizations of \cite{YZ1, YZ2}. Similar results are obtained independently by Huayi Chen \cite{Ch4} with less explicit error terms. 

The initial motivation for our first paper \cite{YZ1} is to obtain some arithmetic version of the classical Noether inequality on minimal surfaces. We have achieved the goal by a rescaling method. As the project goes on, it turns out that this rescaling method, naturally arising from Arakelov geometry, can be also used to prove new results in the geometric setting. For example, we have treated fibered surfaces in \cite{YZ2}. Moreover, by constructing fibrations, such an idea can be used to treat projective varieties of arbitrary dimensions by inductions. These geometric results in turn are the basis of the arithmetic versions in arbitrary dimension. These are the main ideas of the current paper.

\subsection{Geometric case}

Let $X$ be a projective variety of dimension $n$ over a field $k$, and let $L$ be a line bundle on $X$.
The \emph{volume} of $L$ is defined to be
$$
\vol(L): = \limsup_{N \to \infty} \frac{h^0(NL)}{N^n/n!}.
$$
Here we write $NL$ for $L^{\otimes N}$. In fact, we take the convention of writing tensor products of line bundles additively throughout this paper.

It is known that the ``$\limsup$" on the right hand side is actually a limit. See \cite{La} for example. Then we have the following expansion:
$$
h^0(NL) = \frac{1}{n!} \vol(L)N^n + o(N^n), \quad N \to \infty.
$$
The goal of this paper in the geometric case is to provide an effective version of the expansion in the ``$\leq$'' direction.

To introduce the result, we first introduce a basic invariant $\varepsilon(L)$ of $L$.
Recall that a line bundle $M$ on $X$ is \emph{pseudo-effective} if
$$
M\cdot A_1 \cdots A_{n-1} \ge 0
$$
for any nef line bundles $A_1, \cdots, A_{n-1}$ on $X$.
Let $B$ be any big and base-point-free line bundle on $X$. Denote by $\lambda_{L, B}$ the smallest non-negative real number such that
$$
\lambda_{L, B} B- L
$$
is pseudo-effective. We define
$$
\varepsilon(L, B) := (\lambda_{L, B}+1)^{n-1} B^n.
$$
Define
$$
\varepsilon(L) := \inf_B \varepsilon(\CL, B),
$$
where the infimum is taken over all big and base-point-free line bundles $B$ on $X$.
The main result in the geometric case is as follows.

\begin{theorem} \label{main1}
Let $X$ be a geometrically integral projective variety of dimension $n$ over a field $k$. Let $L$ be a line bundle on $X$.
Then
$$
h^0(L) \leq \frac{1}{n!} \vol(L) + n\ \varepsilon(L).
$$
\end{theorem}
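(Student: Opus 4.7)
The plan is to prove Theorem~\ref{main1} by induction on the dimension $n$. The base case $n=1$ reduces to the classical Riemann--Roch bound $h^0(L)\le \deg L+1$ on curves, which fits the statement since a direct check gives $\varepsilon(L)\ge 1$ in this situation.

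For the inductive step I would fix a big and base-point-free line bundle $B$ on $X$ with $\varepsilon(L,B)$ arbitrarily close to $\varepsilon(L)$, set $\lambda := \lambda_{L,B}$, and use Bertini (valid in arbitrary characteristic for big base-point-free bundles on a geometrically integral base of dimension $\ge 2$) to choose a geometrically integral $Y \in |B|$. The workhorse is then the restriction sequence
$$
0 \to \CO_X(L-iB) \to \CO_X(L-(i-1)B) \to \CO_Y\bigl((L-(i-1)B)|_Y\bigr) \to 0,
$$
together with the vanishing $h^0(L-(M+1)B)=0$ for $M := \lfloor \lambda \rfloor$: otherwise $L-(M+1)B$ would be effective and hence pseudo-effective, which combined with the pseudo-effectivity of $\lambda B-L$ would force $M+1\le\lambda$, a contradiction. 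Telescoping the sequences over $i=1,\dots,M+1$ yields
$$
h^0(L) \le \sum_{i=0}^{M} h^0\bigl(Y,\,(L-iB)|_Y\bigr).
$$

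Applying the inductive hypothesis on $Y$ with test bundle $B|_Y$, and using $\lambda_{(L-iB)|_Y,\,B|_Y}\le \lambda-i$ (granting that pseudo-effectivity restricts from $X$ to the general $Y$), each induction error term is at most $(n-1)(\lambda+1)^{n-2}B^n$, summing over the $M+1\le\lambda+1$ indices to at most $(n-1)\varepsilon(L,B)$. For the volume contribution the key estimate is
$$
\sum_{i=0}^{M}\vol\bigl((L-iB)|_Y\bigr)\le \frac{\vol(L)}{n}+\lambda^{n-1}B^n,
$$
which in the nef case follows from the algebraic identity $(L-iB)^n-(L-(i+1)B)^n = \sum_{j=0}^{n-1}(L-iB)^{n-1-j}(L-(i+1)B)^j\cdot B \ge n(L-(i+1)B)^{n-1}\cdot B$ and telescoping, and in general by Fujita approximation or by perturbing $L$ to $L+\epsilon B$ with $\epsilon\downarrow 0$. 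Dividing by $(n-1)!$ and combining with the induction error, the coefficient of $\varepsilon(L,B)$ becomes $\tfrac{1}{(n-1)!}+(n-1)\le n$ for every $n\ge 2$, which gives precisely $h^0(L)\le \vol(L)/n!+n\,\varepsilon(L,B)$; a limit in $B$ then replaces $\varepsilon(L,B)$ by $\varepsilon(L)$.

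The main obstacle I expect is propagating the numerical data from $X$ to the general section $Y$ with essentially no loss: both the restriction of pseudo-effectivity underlying $\lambda_{(L-iB)|_Y,B|_Y}\le\lambda-i$ and the volume sum estimate must be sharp, because any slack introduced here would compound over $n-1$ inductive steps and would inflate the constant in front of $\varepsilon(L)$ beyond $n$.
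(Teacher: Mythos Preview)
Your hyperplane-section induction is a natural idea, and it is genuinely different from the paper's route (the paper passes to a fibration $f:X\to C$ with fiber class $F$ satisfying $(B-F)L^{n-1}=(B-F)B^{n-1}=0$, and then runs an iterated blow-up process that restores nefness at every step). Unfortunately the scheme, as you have set it up, does not close.

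The breaking point is the volume estimate
\[
\sum_{i=0}^{M}\vol\bigl((L-iB)|_Y\bigr)\ \le\ \frac{\vol(L)}{n}+\lambda^{n-1}B^n .
\]
Your telescoping identity
\[
(L-iB)^n-(L-(i+1)B)^n=\sum_{j=0}^{n-1}(L-iB)^{n-1-j}(L-(i+1)B)^{j}\cdot B\ \ge\ n\,(L-(i+1)B)^{n-1}\cdot B
\]
needs both $L-iB$ and $L-(i+1)B$ to be nef, and nothing in the set-up guarantees this: even when $L$ itself is nef, $L-B$ typically is not. Perturbing $L$ to $L+\epsilon B$ does not help (the classes $L+\epsilon B-iB$ are still non-nef for $i\ge 1$), and Fujita approximation replaces $L$ by a nef $A\le \pi^*L$ on a blow-up, which says nothing about $A-i\pi^*B$.

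In fact the displayed inequality is false. Take $X=\mathbb{P}^1\times\mathbb{P}^1$, $L=\mathcal{O}(9,1)$, $B=\mathcal{O}(1,1)$, so $n=2$, $\lambda=9$, $M=9$, $\vol(L)=L^2=18$, $B^2=2$. A general $Y\in|B|$ is a $\mathbb{P}^1$ with $\deg(B|_Y)=2$ and $\deg\bigl((L-iB)|_Y\bigr)=10-2i$, hence
\[
\sum_{i=0}^{9}\vol\bigl((L-iB)|_Y\bigr)=10+8+6+4+2=30,
\qquad
\frac{\vol(L)}{2}+\lambda\,B^2=9+18=27,
\]
contradicting your claim (and any minor strengthening of the right-hand side to $\lambda^{n-1}B^n$ versus $(\lambda+1)^{n-1}B^n$ does not save it either). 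Worse, the whole inductive chain is too lossy here: the induction bound on the curve gives
\[
\sum_{i=0}^{9}h^0\bigl(Y,(L-iB)|_Y\bigr)\ \le\ \sum_i\vol+\sum_i\varepsilon\bigl((L-iB)|_Y,B|_Y\bigr)=30+10\cdot 2=50,
\]
whereas the target $\vol(L)/2+2\,\varepsilon(L,B)=9+2\cdot 20=49$. So even granting the restriction of pseudo-effectivity, the method as written cannot reach the constant $n$ in front of $\varepsilon(L,B)$.

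What the paper does to avoid this is exactly the point you flagged as delicate: rather than cutting by $|B|$ and subtracting $B$ repeatedly, it manufactures a fiber class $F$ (with $(B-F)L^{n-1}=(B-F)B^{n-1}=0$) and, each time it subtracts $a_iF_i$, it blows up the base locus so that the new bundle $L_{i+1}$ is again base-point-free and nef on $X_{i+1}$. This is what makes the analogue of your telescoping (their Proposition~\ref{algcase1}) valid term by term, and it is also what yields the clean bound $\sum_i a_i\le \lambda+1$ (their Lemma~\ref{algsumai}). The non-big case $L^n=0$ is then handled by a single restriction to $W\in|B|$. If you want to pursue a section-restriction proof, the missing idea is precisely this repeated passage to a nef model after each twist; without it the numerics do not add up.
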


\medskip

When $n=1$, the theorem is just the classical $h^0(L)\leq \deg(L)+1$.
When $n=2$, it generalizes the classical Noether inequality on surfaces.
One can also compare it with the result of Shin \cite{Sh}, which is quoted as Theorem \ref{Sh} in our current paper.
We refer to the introductions of \cite{YZ1, YZ2} for more historical accounts.

The theorem is an effective version of the asymptotic expansion of $h^0(N L)$. In fact, it is easy to have
$$
\varepsilon(N L) \le N^{n-1} \varepsilon(L).
$$
Then the result for $NL$ gives
$$
h^0(NL)
\leq
\frac{1}{n!} \vol(N L) + n\ \varepsilon(N L)
 \leq  \frac{1}{n!} \vol(L) N^n+  n\ \varepsilon(L) N^{n-1}.
$$
This gives an effective version of the asymptotic expansion.

If $L$ is big and base-point-free, then $\vol(L)=L^n$ and
$$\varepsilon(N L) \leq  \varepsilon(N L, L)=(N+1)^{n-1} L^n.$$
It follows that the theorem becomes
$$
h^0(NL) \leq \frac{1}{n!}   N^n L^n+ n(N+1)^{n-1} L^n.
$$
This is an effective version of the Hilbert--Samuel formula.
One can compare it with the main result of Koll\'ar--Matsusaka in \cite{KM} and \cite{Ma}.
Under similar assumptions on $L$ and assuming that $k$ has characteristic zero,
their result asserts that
$$\left|h^0(NL) - \frac{1}{n!}   N^n L^n\right| \leq Q_n(L^n, L^{n-1}K_X, N),$$
where $Q_n$ is a (universal) polynomial of three variables whose degree in the third variable is at most $n-1$.
Our result here gives an explicit and simple form of $Q_n$ for the upper bound part, which does not involve $L^{n-1}K_X$, as expected by them.

The result of Koll\'ar--Matsusaka is generalized by Luo \cite{Lu} to the case that $L$ is big and nef, where $Q_n$ is replaced by a polynomial in $N$ of degree at most $n-1$, whose coefficients are determined by $L^{n}$ and $L^{n-1}K_X$.
To compare our result with it, we raise the question whether $\varepsilon(L)$ can be bounded in terms of $L^{n}$ and $L^{n-1}K_X$ if $L$ is big and nef.

On the other hand, it is worth noting that our theorem is true for any line bundle $L$, which does not restrict to multiples of the same line bundle.

The theorem is accurate when $L$ is ``large,'' but it is not so when $L$ is ``small.''
In the latter case, we propose a more delicate bound.

\begin{theorem} \label{main2}
Let $X$ be a smooth and geometrically integral projective variety of dimension $n$ over a field $k$ of characteristic zero. Let $L$ be a line bundle on $X$. Assume that $\omega_X-L$ is pseudo-effective.
Then
$$
h^0(L) \leq \frac{1}{2(n!)} \vol(L) + n\ \varepsilon(L).
$$
\end{theorem}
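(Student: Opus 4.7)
The plan is to prove Theorem \ref{main2} by induction on $n = \dim X$, replaying the inductive framework used to prove Theorem \ref{main1} but replacing the crude one-dimensional base estimate $h^0(L) \le \deg L + 1$ by Clifford's inequality. The improved coefficient $\tfrac{1}{2\,n!}$ enters precisely through this single substitution and then propagates through the induction via the same bookkeeping as before.

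\textbf{Base case} ($n = 1$). On a smooth projective curve $X$ of genus $g$, the condition that $\omega_X - L$ is pseudo-effective is exactly $\deg L \le 2g - 2$. If $h^0(\omega_X - L) > 0$, Clifford's theorem yields $h^0(L) \le \tfrac12\deg L + 1$. Otherwise $h^0(\omega_X - L) = 0$, so by Serre duality $h^1(L) = 0$, and Riemann--Roch gives the strictly sharper $h^0(L) = \deg L + 1 - g \le \tfrac12 \deg L$. Using $\vol(L) = \deg L$ and $\varepsilon(L) \ge 1$, either branch yields $h^0(L) \le \tfrac12\vol(L) + \varepsilon(L)$.

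\textbf{Inductive step} ($n \ge 2$). I would run the argument of Theorem \ref{main1} essentially verbatim, appealing to the inductive hypothesis of Theorem \ref{main2} in place of the inductive hypothesis of Theorem \ref{main1} at each recursive call. Fix a big and base-point-free $B$ approximating $\varepsilon(L)$; take a general smooth geometrically integral $H \in |B|$ (using Bertini, hence characteristic zero); and reduce the bound for $h^0(L)$ on $X$ to bounds for $h^0((L - iH)|_H)$ on $H$ via the same restriction machinery (filtration of $H^0(L)$ by the subspaces $H^0(L - iH)$, graded pieces injecting into $H^0((L - iH)|_H)$, truncation at the smallest $k$ with $L - kH$ not pseudo-effective). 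For each $i \ge 0$, adjunction gives $\omega_H - (L - iH)|_H = \bigl((\omega_X - L) + (i+1)H\bigr)|_H$, the restriction of a pseudo-effective class; for general $H$ in a base-point-free linear system, this restriction is pseudo-effective on $H$ (a standard movable-intersection argument). Hence the inductive hypothesis applies and contributes a main term with coefficient $\tfrac{1}{2(n-1)!}$ in place of $\tfrac{1}{(n-1)!}$. The summation estimates controlling $\sum_i \vol((L - iH)|_H)$ and $\sum_i \varepsilon((L - iH)|_H)$ in terms of $\vol(L)$ and $\varepsilon(L,B)$ are identical to those in the proof of Theorem \ref{main1}, and produce $\tfrac{1}{2\,n!}\vol(L) + n\,\varepsilon(L)$ after taking the infimum over $B$.

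\textbf{Main obstacle.} The substantive new input is confined to the base case. The principal verification at the inductive step is \emph{modularity}: that each appeal to the $(n-1)$-dimensional bound in the proof of Theorem \ref{main1} can be swapped for the sharper $\tfrac{1}{2(n-1)!}$-bound without re-deriving any of the volume or error estimates, and that the pseudo-effectivity hypothesis on $\omega_X - L$ is preserved under all restrictions and subtractions used in the induction (this is automatic: $\omega_X - L$ pseudo-effective implies $\omega_X - (L - iH) = (\omega_X - L) + iH$ pseudo-effective, and restriction to a general $H \in |B|$ preserves pseudo-effectivity). If the argument in Theorem \ref{main1} uses fibration constructions more elaborate than iterated hyperplane restriction, the same propagation has to be verified in that setting, but no new cohomological input beyond Clifford is required.
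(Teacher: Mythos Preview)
Your overall strategy matches the paper's exactly: the base case is Clifford, and the inductive step is a modular rerun of the Theorem \ref{main1} argument with $\tfrac{1}{2(n-1)!}$ replacing $\tfrac{1}{(n-1)!}$ at each call. The paper states this almost verbatim.

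Two points of divergence are worth flagging. First, the actual machinery of Theorem \ref{main1} is not iterated hyperplane restriction but a fibration $f:X\to C$ over a curve (built from two sections of $B$), followed by the blow-up filtration of Theorem \ref{algdecomposition}. The induction is applied on a general \emph{fiber} $F$, where adjunction gives $\omega_F=\omega_X|_F$ (no twist by $H$), so pseudo-effectivity of $\omega_X-L$ restricts directly to $\omega_F-L|_F$. The same works for each $L_i|_{F_i}$ because $\omega_{X_i}-L_i$ remains pseudo-effective under the blow-ups. Your hedge in the last paragraph anticipates this, and your claim that ``no new cohomological input beyond Clifford is required'' is precisely the paper's point.

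Second, the characteristic-zero hypothesis is not primarily for Bertini but for Hironaka: the filtration involves blow-ups $\pi_i:X_{i+1}\to X_i$ along base loci, and one needs each $X_i$ smooth so that the ramification divisor $\omega_{X_{i+1}}-\pi_i^*\omega_{X_i}$ is effective, which is what propagates pseudo-effectivity of $\omega-L$ through the tower. The paper explicitly invokes resolution of singularities at this step. Your proposal locates the char-zero assumption in the wrong place, though the conclusion (that it is needed) is correct.
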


\medskip

When $n=1$, the theorem is essentially Clifford's theorem
$$h^0(L)\leq \frac12\deg(L)+1$$ for special line bundles.
When $n=2$, it is very close to the main theorem of \cite{YZ2}.
We still refer to loc. cit. for more historical accounts.

The theorem is also proved in the recent work \cite{Zht} by one of our authors, but with a more complicated ``error term.''
As in loc. cit., one can apply the above result to prove Severi's conjecture in high dimensions.

Besides the major assumption that  $\omega_X-L$ is pseudo-effective, there are two extra assumptions in the theorem. First, the assumption of characteristic zero is made to use Hironaka's resolution of singularities. Second, the assumption that $X$ is smooth can be weakened to that $X$ has canonical singularities by applying resolution of singularities.
Because resolution of singularities is known for algebraic 3-folds of positive characteristics (cf. \cite{Ab, Cu, CP1, CP2}), the theorem is true in the case that $\mathrm{char}(k)>0$ and $n=3$.

\subsection{Arithmetic case}

Now we describe our arithmetic versions of the above theorems.

Let $K$ be a number field. Let $\CX$ be \emph{an arithmetic variety of dimension $n$ over $O_K$}, i.e, $\CX$ is an $n$-dimensional normal scheme, projective and flat over $O_K$ such that the generic fiber $\CX_{K}$ is \emph{geometrically connected}.
We assume that $\dim \CX \geq 2$ throughout this paper.

By a \emph{hermitian line bundle} on $\CX$, we mean a pair $\lb=(\CL, \| \cdot \|)$, where $\CL$ is an invertible sheaf on $\CX$, and $\| \cdot \|$ is a continuous metric of the line bundle $\CL(\CC)$ on $\CX (\CC)$, invariant under the complex conjugation.

For any hermitian line bundle $\lb=(\CL, \| \cdot \|)$ on $\CX$, denote the set of \textit{effective sections} as follows:
$$
\Hhat(\lb): = \{s \in H^0(\CX , \CL) : \| s \|_{\sup} \le 1 \}.
$$
Define
$$
\hhat(\lb): = \log \# \Hhat(\lb)
$$
and \emph{the arithmetic volume function}
$$
\volhat(\lb): = \limsup_{N \to \infty} \frac{\hhat(N\lb)}{N^n/n!}.
$$
Recall that a hermitian line bundle $\lb$ is called \emph{big} if $\volhat(\lb)>0$.

Recall that a hermitian line bundle $\lb$ over $\CX$ is called \textit{nef} if it satisfies the following conditions:
\begin{itemize}
\item[(1)] $\deghat(\lb|_Z) \ge 0$ for any integral one-dimensional subscheme $Z$ on $\CX$;
\item[(2)] The metric of $\lb$ is semipositive, i.e., the curvature current of the pull-back $j^*\lb$
via any holomorphic map $j:\Omega\to \CX (\CC)$ from an open complex ball $\Omega$ of dimension $n-1$ is positive.
\end{itemize}
The arithmetic nefness,  introduced by Moriwaki \cite{Mo2}, generalizes the arithmetic ampleness of S. Zhang \cite{Zhs2}. In fact, it is the limit notion of the arithmetic ampleness.

Many results in the geometric case have been proved in the current setting (under substantially more efforts). The following is a list of them (in chronological order) that are most related to the subject of this paper.
\begin{itemize}
\item If $\lb$ is ample, then $\volhat(\lb) = \lb^n$. In other words, one has the arithmetic Hilbert--Samuel formula
$$
\hhat(N\lb) = \frac{N^n}{n!} \lb^n + o(N^n), \quad N \to \infty.
$$
This is essentially due to Gillet-Soul\'e \cite{GS1, GS2} and S. Zhang \cite{Zhs2}.
See \cite[Corollary 2.7]{Yu1} for a brief account.
\item Moriwaki \cite{Mo3} proves the continuity of $\volhat$, and extends the result $\volhat(\lb) = \lb^n$ to any nef line bundle $\lb$.
\item Chen \cite{Ch1} proves that the ``$\limsup$" in the definition of $\volhat$ is
a limit. Thus we have the following expansion:
$$
\hhat(N\lb) = \frac{1}{n!} \volhat(\lb) N^n + o(N^n), \quad N \to \infty.
$$
See Yuan \cite{Yu2} for a proof in terms of Okounkov bodies.
\item Chen \cite{Ch2} and Yuan \cite{Yu2} proves the arithmetic Fujita approximation theorem for big hermitian line bundles.
\item Chen \cite{Ch3} proves the differentiability of the arithmetic volume function, based on the bigness theorem of Yuan \cite{Yu1}, the log-concavity of Yuan \cite{Yu2}, and the arithmetic Fujita approximation theorem above.
\item On arithmetic surfaces, the arithmetic Zariski decomposition is proved by Moriwaki \cite{Mo6}. 
\end{itemize}

To state our main theorem in the arithmetic setting, we need to introduce one more invariant.

Let $\CX$ be an arithmetic variety of dimension $n$ over $O_K$, and $\lb$ be a hermitian line bundle on $\CX$.
Define \emph{the volume derivative}
$$
\dvol(\lb)= \sup_{(\CX ', \ab)}\deg(\mathcal A_K)= \sup_{(\CX ', \ab)}\mathcal A_K^{n-1},
$$
where the supremum takes over every pair $(\CX',\ab)$ consisting of an arithmetic variety $\CX '$ endowed with a birational morphism $\pi:\CX '\to \CX$ and a nef $\QQ$-line bundle $\ab$ on $\CX '$ such that $\pi^*\lb-\ab$ is effective.

Some basic property of the volume derivative is as follows:
\begin{itemize}
\item $\dvol(\lb)>0$ if $\lb$ is big. 
\item If $\lb$ is nef, then $\dvol(\lb)=\mathcal L_K^{n-1}.$
\item If $\dim\CX=2$ and $\lb$ is big, then $\dvol(\lb)=\deg(\mathcal P_K)$, where $\overline{\mathcal P}$ is the positive part of $\lb$ as in the arithmetic Zariski decomposition of Moriwaki \cite{Mo6}. 
\end{itemize}

The definition of $\dvol$ sits in the setting of the arithmetic Fujita approximation of Chen \cite{Ch2} and Yuan \cite{Yu2}. Furthermore, in the sense of Chen \cite{Ch3}, 
the definition is actually a positive intersection number, and thus
$$
\dvol(\lb) =\frac{1}{n[K:\QQ]} \lim_{t\to 0} \frac1t \left(\volhat(\lb(t))-\volhat(\lb)\right),
$$
where $\lb(t)$ denotes the hermitian line bundle obtained by multiplying the hermitian metric of $\lb$ by the constant $e^{-t}$ (at every archimedean place).
This is the reason for the name ``volume derivative.''

One can also interpret $\dvol(\lb)$ as some volume function of some graded linear series on the generic fiber $\CX_K$ encoding certain arithmetic property of $\lb$. See Lemma \ref{limit}. For more properties of $\dvol(\lb)$, we refer to \S \ref{section dvol}.

Finally, we are ready to state our first main theorem in the arithmetic case.

\begin{theorem} \label{main3}
Let $\CX$ be an arithmetic variety of dimension $n$ over $O_K$.
Let $\lb$ be a big hermitian line bundle on $\CX$. Then
$$
\hhat(\lb) \le \left(\frac {1}{n!} + \frac{(n-1) \varepsilon(\CL_K)}{\dvol(\lb)}\right) \volhat(\lb) + 4r \log (3r).
$$
Here $r=h^0(\CL_{\QQ})=[K:\QQ] h^0(\CL_K)$.
\end{theorem}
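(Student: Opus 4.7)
The plan is to adapt the rescaling strategy of \cite{YZ1, YZ2} to arbitrary dimension, reducing the arithmetic bound to the geometric Theorem \ref{main1} applied to the generic fibre $\CX_K$ of dimension $n-1$. The broad scheme is: slice the space of effective sections by the sup-norm, apply the geometric bound on each slice, and integrate.

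For $t \ge 0$, let $\lb(t)$ denote $\lb$ with hermitian metric scaled by $e^{-t}$ at each archimedean place, and let $W_t \subseteq H^0(\CX_K, \CL_K)$ be the $K$-span of the effective sections $\Hhat(\lb(t))$. Then $\{W_t\}_{t \ge 0}$ is a decreasing filtration whose behaviour governs both $\hhat(\lb)$ and $\volhat(\lb)$. A Minkowski-type estimate for Hermitian $O_K$-lattices, of the form already used for surfaces in \cite{YZ1}, gives
$$
\hhat(\lb) \le [K:\QQ] \int_0^\infty \dim_K W_t \, dt + 4r \log(3r),
$$
which accounts for the explicit additive error term in the theorem. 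Dually, Chen's convergence result \cite{Ch1} identifies
$$
\volhat(\lb) = n [K:\QQ] \lim_{N \to \infty} N^{-n} \int_0^\infty \dim_K W_t^{(N\lb)} \, dt,
$$
where $W_t^{(N\lb)}$ is the analogous filtration for $N\lb$.

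For each fixed $t$, the subspace $W_t$ is a linear series on $\CX_K$. After passing to a suitable birational modification on which the base locus becomes a divisor, $W_t$ is realised as the complete space of global sections of a line bundle on a modification of $\CX_K$; applying the geometric Theorem \ref{main1} in dimension $n-1$ bounds $\dim_K W_t$ by a volume term plus $(n-1)\varepsilon(\CL_K)$, since $\varepsilon$ does not increase under pullback and is an infimum by definition. Inserting this bound into the filtration for $N\lb$ and passing to the limit, the volume contribution aggregates to the $\volhat(\lb)/n!$ term. The role of $\dvol(\lb)$ then enters in controlling the effective length of the filtration: any approximating pair $(\CX', \ab)$ appearing in the definition of $\dvol$ provides a nef estimate which bounds this length by $\volhat(\lb)/([K:\QQ]\, \dvol(\lb))$ up to lower-order terms, and multiplying by $(n-1)\varepsilon(\CL_K)$ produces the cross-term of the theorem.

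The principal obstacle is the mismatch between the arithmetic filtration $\{W_t\}$ and the hypothesis of Theorem \ref{main1}: the spaces $W_t$ do not a priori arise as global sections of a single line bundle on $\CX_K$, so the geometric bound must be applied fibrewise in $t$ and then combined with an asymptotic argument as $N \to \infty$. This is precisely the setting of the arithmetic Fujita approximation of Chen \cite{Ch2} and Yuan \cite{Yu2}, and explains why the theorem is phrased using $\dvol(\lb)$ rather than $\CL_K^{n-1}$. A secondary technical issue is to extract the constant $4r \log(3r)$ sharply from Minkowski's second theorem in a filtered form, refining the lattice-point counting arguments of \cite{YZ1} to work uniformly across the entire filtration.
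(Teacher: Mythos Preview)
Your broad architecture is right and matches the paper's: filter effective sections by a norm parameter, apply the geometric Theorem \ref{main1} on the generic fibre at each level, sum, and control the length of the filtration via the Fujita approximation / $\dvol$. The additive error $4r\log(3r)$ also arises from a filtered Minkowski estimate, essentially Proposition \ref{secred}. (A minor slip: with the convention $\lb(t)=(\CL,e^{-t}\|\cdot\|)$ the sets $\Hhat(\lb(t))$ \emph{increase} in $t$, so for a decreasing filtration you want $\lb(-t)$.)

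The genuine gap is the step ``the volume contribution aggregates to the $\volhat(\lb)/n!$ term''. You propose to bound each $\dim_K W_t$ by $\frac{1}{(n-1)!}\vol(L_t)+(n-1)\varepsilon(\CL_K)$ and then integrate, but you never explain why
\[
[K:\QQ]\int_0^\infty \frac{\vol(L_t)}{(n-1)!}\,dt \ \le\ \frac{1}{n!}\,\volhat(\lb)
\]
for the \emph{single} bundle $\lb$ in question. Saying ``insert into the filtration for $N\lb$ and pass to the limit'' only yields an asymptotic statement about $\hhat(N\lb)$, which is just the definition of $\volhat$; it does not give the effective inequality for $\lb$ itself. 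This is the heart of the matter, and the continuous sup-norm filtration does not come with an obvious mechanism to control $\int \vol(L_t)\,dt$.

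The paper solves this by working with a \emph{discrete} filtration built from successive blow-ups (Theorem \ref{degred}): one rescales $\lb_i$ by its absolute minimum $c_i$, blows up the base locus of $\Hsefhat(\lb_i(-c_i))$, and extracts a nef $\lb_{i+1}$, iterating until sections vanish. The crucial point is the telescoping inequality of Proposition \ref{onestep},
\[
\lb_0^n \ \ge\ n\sum_{i=0}^N d_i c_i, \qquad d_i = \CL_{i,\QQ}^{\,n-1},
\]
which follows because each $\lb_i'=\lb_i(-c_i)$ is nef and $\pi_i^*\lb_i'-\lb_{i+1}'$ is effective plus $\ob(c_{i+1})$. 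This is what replaces your unproved integral inequality: the volume term is bounded directly by $\lb_0^n/n!$ in the nef case, and then in the general case one inserts a Fujita approximant $\lb_{-1}$ with $\mb\succ\lb_{-1}\succ\lb_0$ at the front of the sequence and applies the same telescoping (together with Lemma \ref{sumci}) to pass from $\lb_0^n$ to $\volhat(\mb)$ and from $d_0$ to $[K:\QQ]\,\dvol(\mb)$. Your $W_t$'s do not carry this nef/effective structure, so the telescoping is unavailable, and without it the main term is uncontrolled.
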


During the preparation of this paper, similar upper bounds of $h^0(L)$ and  $\hhat(\lb)$ are obtained by Huayi Chen \cite{Ch4} independently. In comparison, the error terms in our Theorem \ref{main1} and Theorem \ref{main3} are more explicit than those in \cite[Theorem 1.2]{Ch4} and \cite[Theorem 1.1]{Ch4}. In comparison with \cite[Theorem 1.3]{Ch4}, our proof also gives
$$
\sum_{i=1}^r \max\{\lambda_i(H^0(\CX,\CL), \|\cdot\|_{\sup}),0\}
\leq  \left(\frac {1}{n!} + \frac{(n-1) \varepsilon(\CL_K)}{\dvol(\lb)}\right) \volhat(\lb). 
$$
To keep this paper as accessible as possible, we do not write our treatment in this setting but leave it to interested readers.

Note that in the theorem, the term $\varepsilon(\CL_K)$, introduced in the geometric case, depends only on the generic fiber $\CL_K$ on $\CX_K$.
Furthermore, if $\lb$ is nef, then the term $\dvol(\lb)$ also depends only on the generic fiber  $\CL_K$ on $\CX_K$.

Let us see the asymptotic of Theorem \ref{main3}.
For any integer $N > 0$, by
$\varepsilon(N \CL_K) \le N^{n-2} \varepsilon(\CL_K)$ and $\dvol(N\lb)=N^{n-1}\dvol(\lb)$,
the theorem gives
$$
\hhat(N\lb) \le \left(\frac {1}{n!} + \frac{(n-1) \varepsilon(\CL_K)}{N\cdot\dvol(\lb)}\right) \volhat(\lb) N^n + 4r_N \log (3r_N).
$$
Here $r_N=h^0(N \CL_\QQ)$ can also be effectively bounded by Theorem \ref{main1}.
Hence, we see the effectivity of the theorem.

If $\lb$ is big and nef with a base-point-free generic fiber $\CL_K$, then as in the geometric case, the theorem gives
$$
\hhat(N\lb) \le \frac {1}{n!}   N^n \lb^n
+ (n-1) N (N+1)^{n-2} \lb^n
+ 4r_N \log (3r_N),
$$
where
$$
r_N/[K:\QQ] \leq \frac{1}{(n-1)!}   N^{n-1} \CL_K^{n-1}+ (n-1)(N+1)^{n-2} \CL_K^{n-1}
$$
by Theorem \ref{main1}.

When $n=2$, if the generic fiber of $\CX$ has positive genus, with a minor assumption,
\cite[Theorem B]{YZ1} actually gives
$$
\hhat(\lb) \le \frac {1}{2}  \volhat(\lb) + 4r \log (3r).
$$
In another word, the ``error term'' disappears here.
However, Theorem \ref{main3} applies to any big line bundles in any dimensions.
It is a philosophy that appropriate assumptions of general type should make the ``error term'' cleaner, but it is really complicated to carry it out for high dimensions. However, in dimension three, see the clean result in Theorem \ref{3fold}.

Similar to Theorem \ref{main2}, we have the following more delicate bound when $\lb$ is ``small'' (on the generic fiber). It is a generalization of \cite[Theorem C]{YZ1}.

\begin{theorem} \label{main4}
Let $\CX$ be an arithmetic variety of dimension $n$ over $O_K$.
Let $\lb$ be a big hermitian line bundle on $\CX$.
Assume that $\CX_K$ is smooth, and $\omega_{\CX_K}-\CL_K$ is pseudo-effective.
Then
$$
\hhat(\lb) \le \left(\frac {1}{2(n!)} + \frac{(n-1) \varepsilon(\CL_K)}{\dvol(\lb)}\right) \volhat(\lb) + 4r \log (3r).
$$
Here $r=h^0(\CL_{\QQ})=[K:\QQ] h^0(\CL_K)$.
\end{theorem}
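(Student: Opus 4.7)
The plan is to follow the same structural template as the proof of Theorem \ref{main3}, inserting Theorem \ref{main2} in place of Theorem \ref{main1} wherever the geometric estimate for $h^0$ is applied to a linear series on the generic fiber. The hypothesis that $\omega_{\CX_K}-\CL_K$ is pseudo-effective is precisely what activates Theorem \ref{main2} and produces the improved coefficient $1/(2(n!))$ in the leading term; everything else in the argument for Theorem \ref{main3} goes through unchanged.

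First I would use the definition of $\dvol(\lb)$ to fix, for an arbitrary $\delta>0$, a birational morphism $\pi:\CX'\to\CX$ and a nef $\QQ$-line bundle $\ab$ on $\CX'$ with $\pi^*\lb-\ab$ effective and $\mathcal A_K^{n-1}>\dvol(\lb)-\delta$. Passing to $\CX'$ leaves $\hhat$ and $\volhat$ unaffected, and on the smooth generic fiber $\omega_{\CX'_K}-\mathcal A_K$ remains pseudo-effective, since it is the sum of $\pi_K^*(\omega_{\CX_K}-\CL_K)$ (pseudo-effective by hypothesis), the relative canonical $\omega_{\CX'_K/\CX_K}$ (effective as $\pi_K$ is birational), and $\pi_K^*\CL_K-\mathcal A_K$ (effective by construction). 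Hence the geometric hypothesis needed to invoke Theorem \ref{main2} is inherited on $\CX'$, and similarly on any further model to which the linear series is passed.

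Next, mirroring the filtration argument of Theorem \ref{main3}, I would decompose $\hhat(\lb)$ via the successive minima of the sup-norm lattice $(H^0(\CX,\CL),\|\cdot\|_{\sup})$. A standard Minkowski/Gillet--Soul\'e style lattice-point count (as in \cite{YZ1}) gives
$$\hhat(\lb)\le \int_0^\infty \dim_K F(t)\,dt+4r\log(3r),$$
where $F(t)\subseteq H^0(\CX_K,\CL_K)$ is the image on the generic fiber of sections with $\log\|\cdot\|_{\sup}\le -t$. For each $t\ge 0$, pass to a suitable birational resolution and a quotient line bundle $L_t$ so that $F(t)$ becomes base-point-free in $H^0(L_t)$; since $\omega_{\CX_K}-\CL_K$ is pseudo-effective and $L_t$ is a quotient of $\CL_K$ by an effective divisor, $\omega-L_t$ remains pseudo-effective, and Theorem \ref{main2} yields
$$\dim_K F(t)\le \frac{1}{2((n-1)!)}\,\vol(L_t)+(n-1)\,\varepsilon(L_t).$$

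Finally I would integrate this inequality over $t\in[0,\infty)$. The integral $\int_0^\infty \vol(L_t)\,dt$ recovers $\volhat(\lb)/[K:\QQ]$ up to the normalizing factor $n$, via the positive-intersection / Fujita-approximation interpretation of $\dvol$ recorded in Lemma \ref{limit}; this produces the main term $\frac{1}{2(n!)}\volhat(\lb)$. The integral of $\varepsilon(L_t)$ is bounded uniformly by $\varepsilon(\CL_K)$, and multiplied by the effective length $\volhat(\lb)/\dvol(\lb)$ of the support of the filtration, yields the asserted correction $(n-1)\varepsilon(\CL_K)\volhat(\lb)/\dvol(\lb)$. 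The main obstacle I expect is the uniform control of $\varepsilon(L_t)$ along the filtration: one must arrange, through the choice of $\ab$ and a monotonicity property of $\varepsilon$ under passage to sub-linear-series and to birational models, that the geometric invariant of each $L_t$ is asymptotically no larger than $\varepsilon(\CL_K)$. Apart from this bookkeeping, the factor $1/2$ from Theorem \ref{main2} propagates linearly through the integration, so the proof reduces to a careful repetition of the argument for Theorem \ref{main3} with the sharpened geometric input.
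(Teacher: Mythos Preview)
Your high-level plan---substitute Theorem \ref{main2} for Theorem \ref{main1} in the argument for Theorem \ref{main3}, after checking that pseudo-effectivity of $\omega_{\CX_K}-\CL_K$ survives passage to birational models and to sub-line-bundles---is exactly the paper's own. The paper's proof is in fact a one-line remark to this effect, together with the observation that one must further resolve singularities on each generic fiber $\CX_{i,K}$ appearing in the filtration so that the smoothness hypothesis of Theorem \ref{main2} is available at every step; you address the pseudo-effectivity half of this but should also note the smoothness half.

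Where you diverge is in the technical packaging. The paper literally reruns the \emph{discrete} filtration of Theorem \ref{degred}: Proposition \ref{onestep} yields
\[
\hsefhat(\lb_0)-\frac{1}{2(n!)}\lb_0^n\le\sum_{i=0}^N\Bigl(r_i-\frac{d_i}{2(n-1)!}\Bigr)c_i+4r_0\log r_0+2r_0\log 3,
\]
each summand is bounded by $(n-1)[K:\QQ]\,\varepsilon(\CL_K)$ via Theorem \ref{main2}, and $\sum c_i$ is controlled by Lemma \ref{sumci} together with the Fujita-approximation enhancement, exactly as in the big case of Theorem \ref{main3}. Your continuous-integral reformulation is morally the same argument, but the steps you describe as routine---the integral upper bound for $\hhat$, the identification of $\int_0^\infty\vol(L_t)\,dt$ with $\volhat(\lb)/(n[K:\QQ])$, and the ``effective support length'' $\volhat(\lb)/\dvol(\lb)$---would in practice be justified by appealing back to Proposition \ref{onestep} and Lemma \ref{sumci}. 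So your route is not incorrect, but it re-derives the paper's discrete inequalities in integral language rather than quoting them; since that machinery is already in place, the discrete version is shorter and avoids the bookkeeping you yourself flag around the uniform control of $\varepsilon(L_t)$.
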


\begin{remark}
A related result is the upper bound of the arithmetic degree of the push-forward of $\lb$
to $O_K$ by Bost \cite{Bo}. The result is based on the Chow stability method, which can be viewed as an arithmetic version of Hilbert stability of Cornalba--Harris \cite{CH}.
We refer to \cite{YZ1} for some comparisons for arithmetic surfaces.
\end{remark}

At last, we present a clean theorem for arithmetic 3-folds (under more assumptions).

\begin{theorem} \label{3fold}
Let $\CX$ be an arithmetic 3-fold over $O_K$ such that the Kodaira dimension $\kappa(\CX_K)$ of $\CX_K$ is nonnegative and that $\CX_K$ has no elliptic or hyperelliptic pencil. Let $\lb$ be a nef hermitian line bundle on $\CX$ such that the natural rational map $\phi_{\CL_K}: \CX_K\dashrightarrow \mathbb P(H^0(\CL_K))$ is generically finite. Then
$$
\hhat(\lb) \le \left(\frac 16 + \frac{2}{d_K} \right) \lb^3 + 4r \log (3r).
$$
Here $d_K = \deg (\CL_K)$ and $r=h^0(\CL_\QQ)$.
\end{theorem}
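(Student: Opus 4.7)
The strategy is to follow the inductive/filtration scheme underlying the proof of Theorem \ref{main3}, but to replace the generic error term $(n-1)\varepsilon(\CL_K) \volhat(\lb)/\dvol(\lb)$ by a cleaner one tailored to the hypotheses on $\CX_K$. First, the nefness of $\lb$ gives $\volhat(\lb) = \lb^3$ by arithmetic Hilbert--Samuel and $\dvol(\lb) = \CL_K^2 = d_K$ by the bullet list following the definition of $\dvol$, so the target bound decomposes as the main term $\tfrac{1}{6}\lb^3 = \tfrac{1}{3!}\lb^3$, an error term $\tfrac{2}{d_K}\lb^3$, and the universal arithmetic noise $4r\log(3r)$.

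I would then run the rescaling/filtration argument of \cite{YZ1}, adapted to dimension three. Concretely, one passes to $\hhat(N\lb)$ for large $N$, filters $H^0(\CX, N\CL)$ along multiples of a sufficiently positive divisor on $\CX_K$, applies Minkowski to each graded piece, and divides to extract the asymptotic coefficients. The crucial geometric input at each filtration step is a sharp Clifford/Castelnuovo inequality on $\CX_K$: for a sufficiently general pencil on $\CX_K$, the hypotheses that $\CX_K$ has no elliptic or hyperelliptic pencil and that $\kappa(\CX_K)\ge 0$ jointly guarantee that the general fibre curve $C$ is neither rational, elliptic, nor hyperelliptic, so Clifford's theorem applies in the sharp form $h^0(\CL_K|_C) \le \tfrac{1}{2}\deg(\CL_K|_C)+1$. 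Propagating this factor $\tfrac{1}{2}$ twice --- once from curve to surface via the pencil, once from surface to threefold via the arithmetic filtration --- reproduces the coefficient $\tfrac{1}{6}$ of $\lb^3$.

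The main obstacle is to arrange the filtration so that the additive boundary corrections (the ``$+1$'' at the curve level and a corresponding ``$+2$'' at the surface level) reassemble into precisely $\tfrac{2}{d_K}\lb^3$ rather than a larger multiple. This requires a careful choice of an auxiliary pencil or linear series on $\CX_K$ whose degree is calibrated against $d_K$; it is here that the generic finiteness of $\phi_{\CL_K}$ enters, by providing sufficiently many base-point-free subsystems of $\CL_K$ to initialise the induction with the correct numerical data, while $\kappa(\CX_K)\ge 0$ excludes the rational and ruled cases in which such a pencil of the right gonality would fail to exist. Once the geometric scaffolding is in place, the arithmetic Minkowski estimate of \cite{YZ1} contributes the universal term $4r\log(3r)$, and the pieces assemble to yield the stated inequality.
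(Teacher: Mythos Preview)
Your proposal misidentifies both the geometric input and the central difficulty.

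First, the arithmetic of ``propagating $\tfrac12$ twice'' is wrong: $\tfrac12\cdot\tfrac12=\tfrac14$, not $\tfrac16$. In the paper the coefficient $\tfrac{1}{6}=\tfrac{1}{3!}$ arises from the factor $n=3$ in the numerical inequality $\lb_0^3 \ge 3\sum_i d_ic_i$ of Proposition~\ref{onestep} combined with the \emph{surface} Noether-type bound $r_i \le \tfrac12 d_i + 2[K:\QQ]$ on the generic fibre $\CX_K$ (Shin's Theorem~\ref{Sh}). No curve-level Clifford theorem is invoked, and there is no passage to $N\lb$: the filtration of Theorem~\ref{degred} works directly on $\lb$.

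Second, and more seriously, you miss the actual obstacle. The filtration produces a decreasing sequence $\lb_i$ on blow-ups of $\CX$, and one needs $r_i - \tfrac12 d_i \le 2[K:\QQ]$ for \emph{every} $i$, where $d_i = \CL_{i,\QQ}^2$. But as $i$ grows, $\CL_{i,K}$ shrinks until $\phi_{\CL_{i,K}}$ is no longer generically finite; it becomes composed with a pencil, so $d_i=0$ and Shin's bound is vacuous. The paper's fix is to let $j$ be the last index with $\phi_{\CL_{j,K}}$ generically finite, replace $d_i$ for $i>j$ by the mixed intersection $d'_i = \CL_{j,\QQ}\cdot\CL_{i,\QQ}$, prove a separate numerical inequality $\lb_j'^3 \ge 2\sum_{i>j} d'_ic_i$ (Proposition~\ref{onestep1}), and then invoke the Castelnuovo-type bound $r_i \le \tfrac13 d'_i + [K:\QQ]$ of Theorem~\ref{castelnuovo} for line bundles composed with a non-elliptic, non-hyperelliptic pencil. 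That is where the hypothesis on pencils of $\CX_K$ actually enters --- not on fibre curves of some auxiliary fibration you choose in advance, but on the linear systems $|\CL_{i,K}|$ themselves once they degenerate along the filtration. Your proposed scaffolding does not anticipate or handle this degeneration.
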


\subsection{Ideas of proofs}

The major ideas to prove the theorems are still the rescaling method in \cite{YZ1, YZ2}. However, we do have many innovations to overcome the difficulties in high dimensions.

\subsubsection*{The rescaling method}

For convenience of readers, we first sketch the rescaling method here.
Take Theorem \ref{main1} for example.
Assume that $\lb$ is nef for simplicity here, and we will come back to discuss the extension to the general case later.
We need to give a suitable upper bound of
$$\Delta(\lb)=\hsefhat(\lb)-\frac1{n!} \lb^n.$$
Here $\hsefhat(\lb)$ counts the strictly effective sections,
which is very close to but more convenient than $\hhat(\lb)$.
We first find the largest constant $c\geq 0$ such that
$$
\lb(-c) = (\mathcal L, e^c\| \cdot \|)
$$
is still nef on $\CX$.
It is easy to control $\Delta(\lb)$ by $\Delta(\lb(-c))$.
Then the problem is reduced to $\lb(-c)$.

The key is that $\Hsefhat(\lb(-c))$ is never base-point-free.
By blowing-up the base locus, we obtain a birational morphism $\pi:\CX_1\to \CX$ with a decomposition
$$
\pi^*\lb(-c)= \lb_1+\eb_1.
$$
Here $\eb_1$ is an effective hermitian line bundle associated to the base locus of $\pi^*\lb(-c)$, and $\lb_1$ is base-point-free whose strictly effective sections are bijective to those of $\lb(-c)$.  Then it is easy to control $\Delta(\lb(-c))$ by $\Delta(\lb_1)$.
And the problem is reduced to $\lb_1$.

Keep the reduction process. We obtain $\lb_2, \lb_3, \cdots.$
The key property for the construction is the strict inequality
$$\hsefhat(\lb)>\hsefhat(\lb_1)>\hsefhat(\lb_2) >\cdots.$$
It follows that the process terminates after finitely many steps.
We eventually end up with $\lb_n$ such that $\lb_n(-c_n)$ has no strictly
effective sections. It leads to the proof of the theorem.

\subsubsection*{New ingredients}

The following are some major innovations in this paper.

\begin{itemize}
\item[(1)] The interaction between the geometric case and the arithmetic case. Our proofs of the geometric case are inspired by the rescaling method from the arithmetic case. To apply the method to the geometric case, we construct a fibration of the ambient projective variety over a curve, which mimics the arithmetic setting. To pass from the fibers to the ambient variety, an induction argument is used naturally. On the other hand, the proofs in the arithmetic case use the results in the geometric case.
\item[(2)]
We introduce the invariant $\varepsilon(L)$ to bound the ``error terms.'' It really simplifies the estimates and makes it possible to write down the final inequalities in very general settings in high dimensions.
\item[(3)] In the arithmetic case, the proofs of Theorem \ref{main3} and Theorem \ref{main4} for nef $\lb$, based on Theorem \ref{main1} and Theorem \ref{main2},
are more or less similar to the proofs in \cite{YZ1}. However, the proofs for general $\lb$ are more subtle. In fact, even formulations of the theorems are not obvious. Our new idea is to introduce the derivative volume $\dvol(\lb)$, as a basic invariant of $\lb$.
Then the proof is extended to the general case by the arithmetic Fujita approximation of Chen \cite{Ch2} and Yuan \cite{Yu2} and differentiation theorem of the arithmetic volume function of Chen \cite{Ch3}.
\end{itemize}

\

\noindent\textbf{Acknowledgments.}
The authors would like to thank Huayi Chen and the anonymous referees of our article \cite{YZ1}, who bring insights of the current high-dimensional setting.

The first author is supported by the NSF grant DMS-1330987. The second author
is supported by an NSERC discovery grant.

\section{Geometric case}

The goal of this section is to prove Theorem \ref{main1} and Theorem \ref{main2}.
After introducing the filtration of line bundles, we finish the proofs by collecting the numerical inequalities.

By passing to the algebraic closure, we can assume that $k$ is algebraically closed everywhere in this section.

\subsection{Filtration of line bundles}

This section is a high-dimensional analogue of the construction in \cite{YZ2}.
We include all the details here for completeness.

\subsubsection*{Basic construction}

Let $L$ be any line bundle on a projective variety $X$ with $h^0(L)>0$.
There is a canonical way to separate $L$ from its base locus by blow-up $X$, which is essential in our proof. So we recall it here.

Let $Z$ be the base locus of $L$ in $X$, i.e., the closed subscheme defined by the ideal sheaf given by the image of the composition
$$
H^0(X,L)\times L^\vee \longrightarrow L\times L^\vee \longrightarrow \CO_X .
$$
Note that $Z$ has positive codimension by $h^0(L)>0$.
Let $\pi: X_1\to X$ be the normalization of the blow-up of $X$ along $Z$. Let $Z_1$ be the exceptional divisor on $X_1$, which is the zero locus of the inverse image of the ideal sheaf of
$Z$.
Define a line bundle $L_1=(\pi^*L)\otimes \CO_{X_1}(-Z)$ on $X_1$.
By abuse of notation between line bundles and divisors, we write
$$
\pi^* L = L_1+Z_1.
$$
By definition, the base locus of $\pi^*L$ is $Z_1$.
We have the following properties:
\begin{itemize}
\item There is a canonical isomorphism
$$
H^0(X_1, L_1) \longrightarrow H^0(X_1, \pi^*L) .
$$
\item The line bundle $L_1$ is base-point-free on $X_1$, and thus it is nef.
\item If furthermore $X$ is normal, then the pull-back map gives an isomorphism
$$
H^0(X, L) \longrightarrow H^0(X_1, \pi^*L) .
$$
\end{itemize}
For the last property, the right-hand side is equal to $H^0(X, \pi_*\pi^*L)$,
but $\pi_*\pi^*L=L\otimes \pi_*\CO_{X_1}=L$.

The construction is trivial if $L$ is base-point-free. In the following, we are going to use fibrations to get non-trivial constructions.

\subsubsection*{The filtration}

By a \emph{fibration over a field $k$}, we mean a projective, flat, and geometrically connected morphism $f: X \to C$, where $C$ is a smooth projective curve over $k$, and $X$ is a projective variety over $k$.

Let $f: X \to C$ be a fibration over an algebraically closed field $k$.
For any nef line bundle $L$ on $X$, denote by $e_L$ the positive integer such that
\begin{itemize}
\item $L-e_LF$ is not nef;
\item $L-eF$ is nef for any integer $e<e_L$.
\end{itemize}
Here $F$ denotes a general fiber of $X$ over $C$, and we write $L-eF$ for
the line bundle $L(-eF)$.

Note that $L-e_LF$ is not base-point-free since it is not nef.
So we can perform the basic construction to get a base-point-free line bundle $L_1$, which has ``the same'' global sections as $L$. Keep doing the process on $L_1$, we have the following iterated process.

\begin{theorem}\label{algdecomposition}
Let $f: X \to C$ be a fibration over an algebraically closed field $k$.
Let $L$ be a nef line bundle on $X$ with $h^0(L)>0$.
Then we have a sequence of quadruples
$$
\{(X_i, L_i, Z_i, a_i), \quad i=0, 1, \cdots, N\}
$$
with the following properties:
\begin{itemize}
\item $(X_0, L_0, Z_0, a_0)=(X, L, 0, e_{L_0})$.
\item For any $i=0, \cdots, N-1$, we have $a_i=e_{L_i}$ and $\pi_i: X_{i+1} \to X_i$ is the normalization of the blow-up of $X_i$ along the base locus of $L_i-a_iF_i$, which gives a decomposition
$$
\pi^*_i(L_i-a_iF_i)= L_{i+1} + Z_{i+1}.
$$
Here $Z_{i+1}$ is the exceptional divisor of $\pi_{i}$, the divisor
$F_{i+1}=\pi^*_{i} F_{i}$ and $F_0$ is a general fiber of $X_0$ over $C$.
\item We have
$$h^0(L_0) \geq h^0(L_1)  > h^0(L_2) > \cdots > h^0(L_N)>h^0(L_N-a_NF_N)=0.$$
Here $a_N=e_{L_N}$.
\end{itemize}
\end{theorem}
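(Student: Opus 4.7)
The plan is to build the sequence inductively by repeatedly applying the basic construction (blow-up along the base locus, then normalize) to the line bundles $L_i - a_i F_i$, stopping when the resulting line bundle has no sections.

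I would first check that $e_{L_i}$ makes sense at every step. For this, observe that for any nef line bundle $L'$ on a projective variety $X'$ fibered over $C$ via $f': X' \to C$, intersecting $L' - e F$ with any curve $C' \subset X'$ that meets fibers positively (for instance, a multisection or an ample curve) yields a number that becomes negative once $e$ is large enough; hence $L' - eF$ fails to be nef for large $e$, and the smallest positive such integer exists. Given $(X_i, L_i, Z_i, a_i)$ with $L_i$ nef and $h^0(L_i) > 0$, set $a_i = e_{L_i}$. If $h^0(L_i - a_i F_i) = 0$, stop and set $N = i$. Otherwise, let $\pi_i: X_{i+1} \to X_i$ be the normalization of the blow-up of $X_i$ along the base locus of $L_i - a_i F_i$, let $Z_{i+1}$ be its exceptional divisor, and obtain
$$
\pi_i^*(L_i - a_i F_i) = L_{i+1} + Z_{i+1}
$$
with $L_{i+1}$ base-point-free (hence nef) on the normal variety $X_{i+1}$. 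The basic construction furnishes a canonical isomorphism $H^0(X_{i+1}, L_{i+1}) \cong H^0(X_{i+1}, \pi_i^*(L_i - a_i F_i))$, so $h^0(L_{i+1}) > 0$, allowing the induction to continue with $F_{i+1} = \pi_i^* F_i$ and $a_{i+1} = e_{L_{i+1}}$.

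The $h^0$ comparisons split into two cases. At the first step, the short exact sequence
$$
0 \to L_0(-a_0 F_0) \to L_0 \to L_0|_{a_0 F_0} \to 0
$$
together with the basic construction yields $h^0(L_1) \leq h^0(L_0)$; where normality of $X_0$ is needed to identify pullback sections, I would replace $X_0$ by its normalization, an operation that only increases $h^0(L_0)$ and leaves the target inequality of Theorem \ref{main1} intact. For $i \geq 1$, the model $X_i$ is normal by construction, so the basic construction upgrades to the equality $h^0(L_{i+1}) = h^0(X_i, L_i - a_i F_i)$. The strict inequality $h^0(L_{i+1}) < h^0(L_i)$ then follows from the base-point-freeness of $L_i$: a section of $L_i$ nonvanishing at a chosen point of a general fiber $F_i$ cannot vanish identically along $F_i$, so the restriction map $H^0(X_i, L_i) \to H^0(a_i F_i, L_i|_{a_i F_i})$ is nonzero, making its kernel $H^0(L_i - a_i F_i)$ a proper subspace of $H^0(L_i)$.

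Termination is then automatic, since $h^0(L_1) > h^0(L_2) > \cdots$ is a strictly decreasing sequence of non-negative integers, so some index $N$ with $h^0(L_N - a_N F_N) = 0$ must exist. The main obstacle I anticipate is the strict-decrease step --- specifically, ensuring that the base-point-freeness produced by one iteration of the basic construction is genuinely strong enough to guarantee, at the next stage, a section whose restriction to $F_i$ does not vanish identically. Once this is isolated, the remainder of the argument is careful bookkeeping of the data produced at each stage.
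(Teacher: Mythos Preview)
Your proposal is correct and follows essentially the same approach as the paper's proof: iterate the basic construction, and for $i\geq 1$ use that $L_i$ is base-point-free together with $a_i\geq 1$ to force the restriction map $H^0(L_i)\to H^0(L_i|_{a_iF_i})$ to be nonzero, giving the strict drop $h^0(L_i)>h^0(L_i-a_iF_i)=h^0(L_{i+1})$. The paper's proof is terser and omits the points you spell out (existence of $e_{L_i}$, the normality issue at $i=0$), and your flagged ``main obstacle'' is already handled by your own argument that base-point-freeness of $L_i$ produces a section not vanishing on $F_i$.
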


\begin{proof}
Apply the basic construction repeatedly.
For $i\geq 1$, since $L_i$ is base point free and $e_i>0$, we have
$$
h^0(L_i) > h^0(L_i-e_i F_i)=h^0(L_{i+1}).
$$
Since $h^0$ decreases strictly, the whole process will terminate after finitely many steps.
\end{proof}

\subsubsection*{Numerical inequalities}

Resume the notations in Theorem \ref{algdecomposition}.
Denote $n=\dim X_0$.
Denote
$$
L'_i=L_i-a_iF_i, \quad
r_i=h^0(L_i|_{F_i}), \quad d_i=(L_i|_{F_i})^{n-1}
$$
for $i=0, \cdots, N$.

\begin{prop} \label{algcase1}
For any $j=0, 1, \cdots, N$,
\begin{eqnarray*}
L^n_0 &\ge&  n \sum_{i=0}^j a_i d_i-nd_0, \\
h^0(L_0) &\le& h^0(L'_j) + \sum_{i=0}^j a_{i} r_{i}.
\end{eqnarray*}
\end{prop}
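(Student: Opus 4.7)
The proof splits into two independent parts. For the second inequality, I would iterate the restriction sequence $0 \to H^0(X_i, L_i - F_i) \to H^0(X_i, L_i) \to H^0(F_i, L_i|_{F_i})$ to a general fiber $F_i$. Since $F_i$ is pulled back from a point of $C$, the normal bundle $\mathcal{O}_{X_i}(F_i)|_{F_i}$ is trivial and $(L_i - kF_i)|_{F_i} \cong L_i|_{F_i}$ for every $k$, so iterating the sequence $a_i$ times yields $h^0(L_i) \leq h^0(L'_i) + a_i r_i$. Combined with the inclusion $H^0(X_i, L'_i) \hookrightarrow H^0(X_{i+1}, L_{i+1})$ from the basic construction (pullback is injective and $H^0(X_{i+1},L_{i+1}) \cong H^0(X_{i+1},\pi_i^*L'_i)$), chaining these bounds from $i = 0$ to $j$ gives the stated estimate.

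For the first inequality, the key insight is to telescope the modified quantities $(L_i + F_i)^n$ rather than $L_i^n$; because $F_0^2 = 0$, the identity $(L_0 + F_0)^n = L_0^n + nd_0$ absorbs the boundary term $-nd_0$ cleanly. The main inductive step I would prove is
\[
(L_i + F_i)^n \;\geq\; (L_{i+1} + F_{i+1})^n + n a_i d_i.
\]
To verify it, pull back to $X_{i+1}$ to write $\pi_i^*(L_i + F_i) = M_{i+1} + a_i F_{i+1} + Z_{i+1}$ with $M_{i+1} := L_{i+1} + F_{i+1}$. Expanding with $F_{i+1}^2 = 0$, one obtains $(L_i + F_i)^n = (M_{i+1} + Z_{i+1})^n + n a_i d_i$, since the coefficient $F_{i+1}(M_{i+1} + Z_{i+1})^{n-1}$ reduces, via the projection formula together with the identity $M_{i+1} + Z_{i+1} = \pi_i^*(L_i - (a_i - 1) F_i)$ and $F_i|_{F_i} = 0$, to $(L_i|_{F_i})^{n-1} = d_i$. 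By the minimality of $a_i = e_{L_i}$, the divisor $L_i - (a_i - 1) F_i$ is nef, so both $M_{i+1}$ and $M_{i+1} + Z_{i+1}$ are nef with effective difference $Z_{i+1}$. The telescope $B^n - A^n = (B - A)\sum_{k=0}^{n-1} B^{n-1-k} A^k$ then writes the difference as a sum of intersection numbers of $n-1$ nef divisors with the effective divisor $Z_{i+1}$, each $\geq 0$, giving $(M_{i+1} + Z_{i+1})^n \geq M_{i+1}^n$.

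Iterating this step from $i = 0$ to $j - 1$, and then applying the one-step bound $(L_j + F_j)^n \geq n a_j d_j$ (immediate from the nefness of $L_j - (a_j - 1) F_j$ and $F_j^2 = 0$: expansion of $(L_j - (a_j-1)F_j)^n \geq 0$ yields $L_j^n \geq n(a_j-1)d_j$, whence $(L_j+F_j)^n = L_j^n + nd_j \geq na_jd_j$), produces $(L_0 + F_0)^n \geq n \sum_{i=0}^j a_i d_i$, which rearranges to the claim. The main obstacle is the intersection inequality $(M_{i+1} + Z_{i+1})^n \geq M_{i+1}^n$: naively, higher self-intersections of the effective divisor $Z_{i+1}$ need not have a definite sign, so the bound is not automatic. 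The crucial point is that the minimality of $a_i$ upgrades $M_{i+1} + Z_{i+1}$ from merely effective to nef, so that monotonicity of top self-intersection inside the nef cone applies.
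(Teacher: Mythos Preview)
Your proof is correct and follows essentially the same strategy as the paper's. The only cosmetic difference is in the first inequality: you telescope $(L_i+F_i)^n$ and pick up $na_id_i$ at each step, while the paper telescopes $(L'_i+F_i)^n$ and picks up $na_{i+1}d_{i+1}$; both rest on the same facts (nefness of $L_i-(a_i-1)F_i$, effectivity of $Z_{i+1}$, and $F_i^2=0$) and yield the identical bound after summation.
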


\begin{proof}
Recall that
$$
\pi^*_i L'_i = L_{i+1} + Z_{i+1} = L'_{i+1} + a_{i+1}F_{i+1} + Z_{i+1}.
$$
It implies
$$
\pi^*_i (L'_i+F_i) = (L'_{i+1}+F_{i+1}) + a_{i+1}F_{i+1} + Z_{i+1}.
$$
Note that both $L'_i+F_i$ and $L'_{i+1}+F_{i+1}$ are nef, and $Z_i$ is effective. We have
\begin{eqnarray*}
(L'_i+F_i)^n  \ge  ((L'_{i+1}+F_{i+1}) + a_{i+1}F_{i+1})^n
= (L'_{i+1}+F_{i+1})^n + na_{i+1}d_{i+1}.
\end{eqnarray*}
Summing over $i=0, 1, \cdots, j-1$, we have
$$
(L'_0+F_0)^n  \ge  (L_j'+F_j)^n + n\sum_{i=1}^j a_{i}d_i\ge n\sum_{i=1}^j a_{i}d_i.
$$
It follows that
$$
L^n_0
=(L'_0+F_0)^n+n(a_0-1)d_0
\ge n(a_0-1)d_0 + n \sum_{i=1}^j a_i d_i.
$$
This proves the first inequality.

For the second inequality,  use the exact sequence
$$
0 \longrightarrow H^0(L_{i+1}-F_{i+1}) \longrightarrow H^0(L_{i+1}) \longrightarrow H^0(L_{i+1}|_{F_{i+1}}).
$$
Then
$$
h^0(L_{i+1}-F_{i+1}) \ge h^0(L_{i+1})-h^0(L_{i+1}|_{F_{i+1}})=h^0(L_{i+1})-r_{i+1}.
$$
By induction, we have
$$
h^0(L'_{i+1})=h^0(L_{i+1}-a_{i+1}F_{i+1}) \ge h^0(L_{i+1})-a_{i+1}r_{i+1} = h^0(L'_{i})-a_{i+1}r_{i+1}.
$$
Furthermore,
$$
h^0(L_0) \le h^0(L'_0)+a_0r_0.
$$
The inequality is proved by summing over $i=0, \cdots, j-1$.
\end{proof}

In the following, we give a bound of $L_0^n$ in terms of just $d_0$.

\begin{lemma}\label{algsumai}
With the above notation, we have
$$
L^n_0 \ge d_0\left(\sum_{i=1}^N a_i+na_0 -n\right) \ge d_0 \left(\sum_{i=0}^N a_i - 1\right).
$$
\end{lemma}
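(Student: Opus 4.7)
I would handle the two inequalities separately. The second one is purely arithmetic: subtracting the right-hand side from the left yields $(n-1)(a_0-1)d_0 \ge 0$, which holds because $a_0$ is a positive integer (it is the value $e_{L_0}$, which is defined as a positive integer).

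For the first and main inequality, my starting point is Proposition~\ref{algcase1}, which gives
\[
L_0^n \;\ge\; n\sum_{i=0}^N a_i d_i - n d_0 \;=\; n a_0 d_0 + n \sum_{i=1}^N a_i d_i - n d_0.
\]
Comparing this with the target $L_0^n \ge d_0(\sum_{i=1}^N a_i + n a_0 - n) = n a_0 d_0 + d_0 \sum_{i=1}^N a_i - n d_0$, the remaining gap is exactly $\sum_{i=1}^N a_i(n d_i - d_0)$, so it suffices to establish the pointwise bound $n\,d_i \ge d_0$ for each $i\ge 1$.

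To prove $n d_i \ge d_0$, I would restrict to the general fiber. The iterated blow-up $\pi_{0\to i}\colon X_i\to X_0$ restricts to a birational morphism $\tilde\sigma_i\colon F_i\to F_0$, and the identity $\pi_{0\to i}^* L_0 = L_i + A_i F_i + \tilde Z_i$ (where $\tilde Z_i=\sum_{k=1}^i \pi_{k\to i}^*Z_k$) restricts, after using $F_j|_{F_j}=0$, to $\tilde\sigma_i^*(L_0|_{F_0}) = L_i|_{F_i} + \tilde E_i$ with $\tilde E_i := \tilde Z_i|_{F_i}$ an effective $\tilde\sigma_i$-exceptional divisor on $F_i$. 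Since $\tilde\sigma_i$ is birational,
\[
d_0 \;=\; (L_i|_{F_i}+\tilde E_i)^{n-1} \;=\; d_i + (n-1)(L_i|_{F_i})^{n-2}\cdot\tilde E_i + \sum_{k=2}^{n-1}\binom{n-1}{k}(L_i|_{F_i})^{n-1-k}\cdot \tilde E_i^{\,k}.
\]
The first cross term is $\ge 0$ (nef times effective, using that $L_i|_{F_i}$ is base-point-free for $i\ge 1$ by Theorem~\ref{algdecomposition}). The higher cross terms would then be controlled by the negativity properties of the $\tilde\sigma_i$-exceptional divisor $\tilde E_i$ on a resolution of $F_0$ (negative definiteness of the intersection form on each connected component of the exceptional locus over a point), yielding enough control to conclude $d_0 \le n\,d_i$.

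\textbf{Main obstacle.} The hard step is bounding the higher-order contributions $(L_i|_{F_i})^{n-1-k}\cdot\tilde E_i^{\,k}$ for $k\ge 2$: a priori these can have either sign, and pinning down their aggregate contribution to guarantee $d_0\le n d_i$ requires the fine negativity of birational exceptional divisors in every codimension. In characteristic zero this is accessible through Hironaka resolution of $F_0$ followed by standard Zariski-type decomposition arguments; for general singular fibers or in positive characteristic the negativity must be argued more directly from the structure of the iterated blow-ups constructed in Theorem~\ref{algdecomposition}.
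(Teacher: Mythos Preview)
Your argument for the second inequality is fine. The first inequality, however, has a genuine gap: the claim $n\,d_i \ge d_0$ on which your reduction rests is simply false.

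Concretely, take $n=2$, $X_0=\mathbb{F}_e$ a Hirzebruch surface with $e\ge 2$, fiber $F$ and negative section $\sigma$, and $L_0=a(\sigma+eF)$ with $a\ge 3$. Then $d_0=a$, and one checks that each step of Theorem~\ref{algdecomposition} peels off exactly one copy of $\sigma$ from the base locus, giving $d_i=a-i$ for $i=0,\dots,a$. For $i>a/2$ you get $2d_i<d_0$, contradicting what you need. Note also that in this example every blow-up center is the Cartier divisor $\sigma$, so each $\pi_i$ is an isomorphism; hence $\tilde\sigma_i\colon F_i\to F_0$ is an isomorphism and $\tilde E_i=\tilde Z_i|_{F_i}$ is a nonzero divisor with \emph{no} exceptional support. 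This pinpoints the flaw in your argument: the divisors $Z_k$ in Theorem~\ref{algdecomposition} are the total transforms of the base loci, not merely blow-up exceptional divisors, so their horizontal codimension-one components restrict to non-exceptional divisors on the general fiber. There is no negativity to invoke.

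The paper's proof avoids comparing $d_i$ to $d_0$ altogether. It pulls everything to $X_N$, writes $\tau_0^*L_0' \sim L_N' + bF_N + Z$ with $b=\sum_{i=1}^N a_i$, and then expands $(L_0'+F_0)^n$ \emph{asymmetrically}: keep $n-1$ factors as the nef class $\tau_0^*L_0'+F_N$ and substitute the decomposition only into the last factor. Dropping the effective $Z$ against the nef $(n-1)$-fold product leaves the term $b\,(\tau_0^*L_0'+F_N)^{n-1}\cdot F_N$, and since $F_N^2=0$ this equals $b\,(L_0|_{F_0})^{n-1}=b\,d_0$ on the nose. That is the missing idea: extract $d_0$ once, globally, rather than trying to bound each $d_i$ from below.
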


\begin{proof}
For $i=0, \cdots, N-1$, denote by
$$
\tau_i=\pi_{i} \circ \cdots \circ \pi_{N-1} : X_N \to X_i
$$
the composition of blow-ups and denote $\tau_N = {\rm id}_{X_N}: X_N \to X_N$.

Write $b=a_1+\cdots+a_N$ and $Z=\tau^*_1Z_1+ \cdots +\tau^*_NZ_N$. We have the following numerical equivalence on $X_N$:
$$
\tau^*_0 L'_0 \sim_{\rm{num}} L'_N + bF_{N} + Z.
$$
Since $L'_0+F_0$ and $L'_N+F_N$ are both nef, it follows that
\begin{eqnarray*}
(L'_0+F_0)^n & = &(\tau^*_0 L'_0 + F_{N})^{n-1}(L'_N+F_N+bF_N+Z) \\
& \ge & (\tau^*_0 L'_0 + F_{N})^{n-1}(L'_N+F_N) + b(\tau^*_0 L'_0 + F_{N})^{n-1} F_N \\
& \ge & b d_0.
\end{eqnarray*}
Combining with
$$
L_0^n-(L'_0+F_0)^n=n(a_0-1)d_0,
$$
the proof of the first inequality is finished. The second one is just because $a_0 \ge 1$.
\end{proof}

\subsection{Proofs of the main theorems}

Here we prove Theorem \ref{main1} and Theorem \ref{main2}.
We are going to prove them by induction on $n=\dim X$.
Note that the case $n=1$ is known. Assume $n\geq 2$ in the following.

\subsubsection*{Nef line bundles and fibrations}

Recall that Theorem \ref{main1} asserts that
$$
h^0(L) \leq \frac{1}{n!} \vol(L) + n\ \varepsilon(L).
$$

First, it is easy to reduce the problem to the case that $L$ is nef.
Assume $h^0(L)>0$, or there were nothing to prove.
By the basic construction, we have a birational morphism $\pi: X_1\to X$ and a decomposition
$$
\pi^*L=L_1+Z_1
$$
with $L_1$ nef and $Z_1$ effective.
Furthermore,
$$
h^0(L)\leq h^0(\pi^* L)=h^0(L_1), \quad
\vol(L) = \vol(L_1+Z_1) \geq \vol(L_1) \geq L_1^n$$
and
$$
\varepsilon(L) \geq \varepsilon(\pi^*L) \geq \varepsilon(L_1).
$$
It follows that the result on $L_1$ implies that on $L$.

Second, it is also easy to reduce it to the case that there is a fibration on $X$.
Let $B$ be any big and base-point-free line bundle on $X$.
We need to prove
$$
h^0(L) \leq \frac{1}{n!} \vol(L) + n\ \varepsilon(L,B).
$$
We will reduce it to the case that there is fibration $f:X \to C$, such that
$$(B-F)\cdot L^{n-1}=(B-F)\cdot B^{n-1}=0, $$
where $F$ denotes a general fiber of $X$ above $C$.

In fact, since $B$ is big and base-point-free, the map $X \to \mathbb P(H^0(X,B))$ is a generically finite morphism. Take two different irreducible elements $W_1,W_2\in |B|$, and let $\sigma: X' \to X$ be the blow-up of $X$ along the intersection $W_1\cdot W_2$.
Denote by $T$ the exceptional divisor.
Then the divisors $\sigma^*W_1-T, \sigma^*W_2-T\in H^0(X',D)$ are disjoint.
Denote $D=\sigma^*B\otimes \CO_{X'}(-T)$, and
denote by $s_1$ and $s_2$ the sections of $H^0(X', D)$ defining
$\sigma^*W_1-T, \sigma^*W_2-T$.
These two sections define a morphism $f:X'\to \mathbb P^1$, which is the fibration.
By construction, $D$ is the linear equivalence class of fibers of $f$, and $\sigma^*B-D=T$ is effective.

Replace $(X, L, B)$ by $(X', \sigma^*L, \sigma^*B)$. Then we have the desired fibration on $X$.

\subsubsection*{General case}

By the above argument, Theorem \ref{main1} is reduced to the following statement:

\emph{
Let $f:X \to C$ be a fibration over an algebraically closed field $k$.
Let $L$ be a nef line bundle on $X$, and $B$ be a big and base-point-free line bundle on $X$.
Denote $n=\dim X$ and denote by $F$ a general fiber of $X$ above $C$.
Assume $(B-F) L^{n-1}=(B-F) B^{n-1}=0$.
Then
$$
h^0(L) \leq \frac{1}{n!} L^n + n\ \varepsilon(L,B).
$$}

Now we prove this statement by induction on $n$.
Apply the construction in Theorem \ref{algdecomposition} to $(X,C, L, F)$.
Still use the same notations as the theorem.
By Proposition \ref{algcase1},
\begin{eqnarray*}
L^n_0 & \ge & n\sum_{i=0}^N a_{i} d_i-nd_0, \\
h^0(L_0) & \le &  \sum_{i=0}^N a_{i} r_{i}.
\end{eqnarray*}
Here we recall $r_i=h^0(L_i|_{F_i})$ and $d_i=(L_i|_{F_i})^{n-1}$.
The difference gives
\begin{eqnarray*}
h^0(L_0) - \frac{1}{n!}L^n_0 & \le &
 \sum_{i=0}^N \left(r_i-\frac{d_i}{(n-1)!}\right) a_i + \frac{d_0}{(n-1)!}.
\end{eqnarray*}

By induction on $n$, we have
$$
r_i\leq \frac{d_i}{(n-1)!} + (n-1)\varepsilon(L_i|_{F_i}).
$$
Therefore,
\begin{eqnarray*}
h^0(L_0) - \frac{1}{n!}L^n_0
& \le &  (n-1) \sum_{i=0}^N a_i\ \varepsilon(L_i|_{F_i})+ \frac{d_0}{(n-1)!} .
\end{eqnarray*}
It suffices to estimate $\varepsilon(L_i|_{F_i})$.

Let $\lambda\geq 0$ be the smallest real number such that $\lambda B-L$ is pseudo-effective.
By definition,
$$
\varepsilon(L,B)=(\lambda +1)^{n-1}B^n.
$$
Denote by $B_i$ the pull-back of $B=B_0$ from $X=X_0$ to $X_i$.
Note that
$\lambda B_i-L_i$ is pseudo-effective, and so is $\lambda (B_i|_{F_i})-L_i|_{F_i}$. Thus
$$
\varepsilon(L_i|_{F_i}) \le
\varepsilon(L_i|_{F_i}, B_i|_{F_i}) \le (\lambda +1)^{n-2} (B_i|_{F_i})^{n-1} = (\lambda +1)^{n-2} B^n
= \frac{\varepsilon(L, B)}{\lambda +1}.
$$

Therefore, we have
\begin{eqnarray*}
h^0(L_0) - \frac{1}{n!}L^n_0
& \le & (n-1)\frac{\varepsilon(L, B)}{\lambda +1} \sum_{i=0}^N a_i + \frac{d_0}{(n-1)!}.
\end{eqnarray*}
It remains to bound $a_0+\cdots+a_N$ and $d_0$.

We first treat the case $L^n>0$.
The pseudo-effectiveness of $\lambda B-L$ has the following consequences:
\begin{itemize}
\item[(1)] It implies $L^n\le \lambda  L^{n-1} B = \lambda d_0$, and thus $d_0>0$.
By Lemma \ref{algsumai},
$$
\sum_{i=0}^N a_i \leq \frac{L^n_0}{d_0} + 1 \leq \lambda +1.
$$
\item[(2)] It implies that
$$
d_0 = L^{n-1}B \le \lambda  L^{n-2} B^2 \le \cdots \le \lambda ^{n-1} B^n.
$$
\end{itemize}
Therefore, if $L^n >0$, we have
\begin{eqnarray*}
h^0(L_0) - \frac{1}{n!}L^n_0
\le  (n-1) \varepsilon(L, B)+ \frac{\lambda ^{n-1} B^n}{(n-1)!}
 \le  n\  \varepsilon(L, B).
\end{eqnarray*}

Next, we treat the case $L^n=0$.
Denote by $W$ an irreducible element of $|B|$.
Consider the exact sequence
$$
0\longrightarrow H^0(L-B) \longrightarrow H^0(L) \longrightarrow H^0(L|_W).
$$
Since $L$ is not big, we have $H^0(L-B)=0$.
It follows that
$$
h^0(L) \le h^0(L|_{W}).
$$
By induction, we have
\begin{eqnarray*}
h^0(L) - \frac{L^n}{n!} & \le & h^0(L|_{W}) \le \frac{L^{n-1}B}{(n-1) !} + (n-1) \varepsilon(L|_{W}, B|_{W}) \\
& \le & \frac{\lambda ^{n-1}B^n}{(n-1)!} + (n-1) (\lambda +1)^{n-2} B^n \le n\ \varepsilon(L, B).
\end{eqnarray*}
This completes the proof.

\subsubsection*{Small case in characteristic zero}

The above proof can be easily modified to prove Theorem \ref{main2}.
Recall that we have extra conditions that $X$ is smooth and $k$ is of characteristic zero, and the crucial assumption that $\omega_X-L$ is pseudo-effective.
We need to strengthen the above result to
$$
h^0(L) \leq \frac{1}{2(n!)} \vol(L) + n\ \varepsilon(L).
$$

Still use the induction method. If $n=1$, then
the assumption that $\omega_X-L$ is pseudo-effective is just $\deg(L)\leq \deg(\omega_X)$.
The inequality becomes
$$
h^0(L) \leq \frac{1}{2} \vol(L) + 1.
$$
This is Clifford's theorem.
It is well known to be true in the case that $L$ is special, i.e., both $h^0(L)>0$ and $h^0(\omega_X-L)>0$. But the case $h^0(\omega_X-L)=0$ can be proved by the Riemann--Roch theorem.

To mimic the above induction method for general $n\geq 2$, we need to keep track of the pseudo-effectivity of $\omega_X-L$ under blow-up's and fibrations.

For any birational morphism $\pi:X'\to X$ with both $X$ and $X'$ smooth, it is a basic result that the ramification divisor $\omega_{X'}-\pi^*\omega_X$ is effective (or zero).
It follows that the pseudo-effectivity of $\omega_X-L$  implies that of
$\omega_{X'}-\pi^*L$. Hence, we can always replace $(X,L)$ by $(X', \pi^*L)$ for any smooth variety $X'$ with a birational morphism to $X$.

With this property, we can still reduce the problem to the case that $L$ is nef.
As before, we can assume that $X$ is endowed with a fibration $f:X \to C$.
Here $X$ is assumed to be smooth by resolution of singularity.

Now we are in the situation to perform the reduction process.
Proposition \ref{algcase1} implies
\begin{eqnarray*}
h^0(L_0) - \frac{1}{2(n!)}L^n_0 & \le &
 \sum_{i=0}^N \left(r_i-\frac{d_i}{2(n-1)!}\right) a_i + \frac{d_0}{2(n-1)!}.
\end{eqnarray*}

Note that the general fiber $F$ of $f$ is also smooth, and the adjunction formula gives
$\omega_{F}=\omega_{X}|_F$. It follows that the pseudo-effectivity of $\omega_X-L$ implies that of $\omega_{F}-L|_F$. Thus the induction assumption applies to the line bundle $L|_F$ on $F$.
The rest of the proof goes through without any extra difficulty.

\section{Arithmetic case}

The goal of this section is to prove Theorem \ref{main3}, Theorem \ref{main4} and Theorem \ref{3fold}. The plan of this section is as follows. In \S 3.1, we recall some basic results in Arakelov geometry. In \S 3.2, we study the basic invariant $\dvol$ and give the new interpretation in terms of the arithmetic Fujita approximation, which will be needed to prove the main theorems. In \S 3.3, we introduce the filtration construction of hermitian line bundles, which are high-dimensional version of that in \cite{YZ1}. In \S 3.4, we prove
Theorem \ref{main3} and Theorem \ref{main4}. In \S 3.5, we prove Theorem \ref{3fold}.

\subsection{Notations and preliminary results}

This section is essentially a reproduction of some part of \cite{YZ1}.
But we would like to list the results here for readers' convenience.

\subsubsection*{Normed modules}

By a normed $\mathbb Z$-module, we mean a pair $\mbar=(M,\|\cdot\|)$ consisting of
a $\mathbb Z$-module $M$ and an $\mathbb R$-norm $\| \cdot \|$ on
$M_{\mathbb R}=M \otimes_{\mathbb Z} \mathbb R$.
We say that $\mbar$ is a normed free $\mathbb Z$-module of finite rank, if
$M$ is a free $\mathbb Z$-module of finite rank.
This is the case which we will restrict to.

Let $\mbar=(M,\|\cdot\|)$ be a normed free $\mathbb Z$-module of finite rank.
Define
$$
\Hhat(\mbar)=\{m \in M: \| m \| \le 1\}, \quad
\Hhat_{\rm sef}(\mbar)=\{m \in M: \| m \| <1\},
$$
and
$$
\hhat(\mbar)= \log \# \Hhat(M), \quad \hhat_{\rm sef}(\mbar)= \log \# \Hhat_{\rm
sef}(M).
$$
The Euler characteristic of $\mbar$ is defined by
$$\chi(\mbar)=\log
\frac{\mathrm{vol}(B(M))}{\mathrm{vol}(M_{\RR}/M)},$$
where
$B(M) = \{ x \in M_\mathbb R: \| x \| \le 1 \}$
is a convex body in $M_{\RR}$.

For any $\alpha \in \mathbb R$, define
$$\mbar(\alpha)=(M, e^{-\alpha} \| \cdot \| ).$$
Since $\hhat_{\rm sef}(\mbar)$ is finite, it is easy to have
$$
\hhat_{\rm sef}(\mbar)= \lim_{\alpha\to 0^{\text{-}}}  \hhat(\mbar(\alpha)).
$$
Then many results on $\hhat$ can be transfered to $\hsefhat$.

\begin{prop} \label{norm1} \cite[Propostion 2.1]{YZ1}
Let $\mbar=(M,\|\cdot\|)$ be a normed free module of rank $r$. The following are
true:
\begin{itemize}
	\item[(1)] For any $\alpha\geq 0$, one has
$$
\begin{array}{rllll}
\hhat(\mbar(-\alpha)) &\leq& \hhat(\mbar) &\leq& \hhat(\mbar(-\alpha))+
r\alpha+r\log3, \\
\hsefhat(\mbar(-\alpha)) &\leq& \hsefhat(\mbar) &\leq& \hsefhat(\mbar(-\alpha))+
r\alpha+r\log3.
\end{array}
$$
	\item[(2)] One has
	$$ \hsefhat(\mbar) \leq \hhat(\mbar) \leq \hsefhat(\mbar)+r\log3. $$
\end{itemize}
\end{prop}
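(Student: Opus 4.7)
The plan is to derive both parts from a standard counting estimate in the geometry of numbers and then chase definitions. First, I would establish (or cite) the following classical bound: for any full-rank lattice $\Lambda \subset \RR^r$, any symmetric convex body $B \subset \RR^r$, and any real $t \geq 1$,
$$
\#(\Lambda \cap tB) \;\le\; (2t+1)^r\,\#(\Lambda \cap B).
$$
The proof is routine via Minkowski's second theorem applied to $B$, comparing both sides to the successive minima; alternatively one can argue directly by mapping each point of $\Lambda \cap tB$ to its class modulo an auxiliary sublattice built from $\Lambda \cap B$. This is the only non-formal input.

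Given the counting lemma, the left-hand inequalities in (1) are immediate: since $\alpha \geq 0$, the module $\mbar(-\alpha) = (M, e^{\alpha}\|\cdot\|)$ has a pointwise larger norm than $\mbar$, so the inclusions $\Hhat(\mbar(-\alpha)) \subseteq \Hhat(\mbar)$ and $\Hhat_{\rm sef}(\mbar(-\alpha)) \subseteq \Hhat_{\rm sef}(\mbar)$ are trivial. For the right-hand inequality on $\hhat$, I would set $B_0 = \{x \in M_{\RR} : \|x\| \leq 1\}$ and apply the counting lemma with $\Lambda = M$, convex body $e^{-\alpha} B_0$, and $t = e^\alpha$. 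Since $t \cdot (e^{-\alpha} B_0) = B_0$, this gives
$$
\#\Hhat(\mbar) \;\le\; (2e^\alpha + 1)^r\,\#\Hhat(\mbar(-\alpha)).
$$
Taking logarithms and using $2 e^\alpha + 1 \leq 3 e^\alpha$ (valid since $\alpha \geq 0$) yields $\hhat(\mbar) \leq \hhat(\mbar(-\alpha)) + r\alpha + r\log 3$. The identical argument with $B_0$ replaced by the open unit ball $\{x : \|x\| < 1\}$ handles the $\hsefhat$ version.

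For (2), the inequality $\hsefhat(\mbar) \leq \hhat(\mbar)$ is just $\Hhat_{\rm sef}(\mbar) \subseteq \Hhat(\mbar)$. For the other direction, I would observe that for any $\alpha > 0$,
$$
\Hhat(\mbar) = \{m : \|m\| \leq 1\} \;\subseteq\; \{m : \|m\| < e^\alpha\} = \Hhat_{\rm sef}(\mbar(\alpha)),
$$
so $\hhat(\mbar) \leq \hsefhat(\mbar(\alpha))$. Applying the $\hsefhat$ bound from (1) to $\mbar(\alpha)$ and using the identity $\mbar(\alpha)(-\alpha) = \mbar$ gives $\hsefhat(\mbar(\alpha)) \leq \hsefhat(\mbar) + r\alpha + r\log 3$. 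Letting $\alpha \to 0^+$ produces $\hhat(\mbar) \leq \hsefhat(\mbar) + r\log 3$.

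The main obstacle is modest: the counting lemma is the one non-formal ingredient, and everything else is careful bookkeeping of strict versus non-strict norm inequalities. The reason (2) is not merely the $\alpha = 0$ case of (1) is that $\hhat$ and $\hsefhat$ differ on the boundary $\{m : \|m\| = 1\}$, so a limiting argument is needed; it is this limit that explains why the same constant $r\log 3$ reappears in part (2).
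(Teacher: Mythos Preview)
Your argument is correct, and it is in spirit the same as what is done in the cited reference. Note, however, that the present paper does not actually give a proof of this proposition: it is quoted verbatim from \cite[Proposition 2.1]{YZ1}, so there is nothing in this paper to compare against directly. The counting lemma you invoke, $\#(\Lambda \cap tB) \le (2t+1)^r\,\#(\Lambda \cap B)$, is exactly the geometry-of-numbers input underlying the result in \cite{YZ1} (and is closely related to the successive-minima estimates of Gillet--Soul\'e \cite{GS1} that the paper cites just below for Proposition~\ref{secred}); your deduction of both the $\hhat$ and $\hsefhat$ bounds from it, and the limiting argument for part~(2), are all sound. One small cosmetic point: rather than rerunning the counting lemma for the open ball, you can simply use the identity $\hsefhat(\mbar) = \lim_{\beta\to 0^-}\hhat(\mbar(\beta))$ recorded in the paper to transfer the $\hhat$ inequality to $\hsefhat$ directly.
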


The following filtration version is based on the successive minima of Gillet--Soul\'e \cite{GS1}.

\begin{prop} \label{secred} \cite[Proposition 2.3]{YZ1}
Let $\overline M=(M, \|\cdot\|)$ be a normed free $\mathbb Z$-module of finite
rank.
Let $0=\alpha_0\leq \alpha_1\leq \cdots \leq \alpha_n$ be an increasing
sequence.
For $0\leq i \leq n$, denote by $r_i$ the rank of the submodule of $M$ generated
by $\widehat H^0(\overline M(-\alpha_i))$. Then
\begin{eqnarray*}
\widehat h^0(\overline M) & \leq& \widehat h^0(\overline M(-\alpha_n))+
\sum_{i=1}^{n} r_{i-1} (\alpha_{i}-\alpha_{i-1})  + 4r_0 \log r_0+2r_0 \log 3,\\
\widehat h^0(\overline M) &\geq&
\sum_{i=1}^{n} r_i(\alpha_i- \alpha_{i-1})
- 2r_0 \log r_0-r_0 \log 3 .
\end{eqnarray*}
The same results hold for the pair
$(\widehat h^0_{\rm sef}(\overline M),\widehat h^0_{\rm sef}(\overline
M(-\alpha_n))).$
\end{prop}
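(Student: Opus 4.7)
The plan is to reduce the statement to the Gillet--Soul\'e arithmetic analog of Minkowski's second theorem, which expresses $\hhat(\overline M)$ in terms of the logarithmic successive minima of $\overline M$. Let $\nu_1 \leq \nu_2 \leq \cdots \leq \nu_r$ denote these successive minima, where $r$ is the rank of $M$. By the definition of successive minima, the submodule generated by the lattice points in the ball of radius $e^{-\alpha_i}$ has rank exactly $\#\{j : \nu_j \leq e^{-\alpha_i}\}$; consequently $r_i = \#\{j : \nu_j \leq e^{-\alpha_i}\}$, and the filtration $r_0 \geq r_1 \geq \cdots \geq r_n$ records how the successive minima distribute across the intervals $(e^{-\alpha_i}, e^{-\alpha_{i-1}}]$. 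With this translation, the entire proof becomes a combinatorial manipulation.

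The Gillet--Soul\'e estimate gives an approximate identity of the form
$$
\hhat(\overline M) = \sum_{j \,:\, \nu_j \leq 1} \log(1/\nu_j) + O(r_0 \log r_0),
$$
with explicit constants matching the $4 r_0 \log r_0 + 2 r_0 \log 3$ appearing in the statement. Applied to $\overline M(-\alpha_n)$, whose successive minima are the $e^{\alpha_n} \nu_j$, the same estimate yields
$$
\hhat(\overline M(-\alpha_n)) = \sum_{j \leq r_n} \bigl(\log(1/\nu_j) - \alpha_n\bigr) + O(r_0 \log r_0).
$$
Subtracting produces
$$
\hhat(\overline M) - \hhat(\overline M(-\alpha_n)) = \sum_{j = r_n + 1}^{r_0} \log(1/\nu_j) + r_n \alpha_n + O(r_0 \log r_0).
$$

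For indices $j$ satisfying $r_i < j \leq r_{i-1}$, the definition of $r_i$ forces $e^{-\alpha_i} < \nu_j \leq e^{-\alpha_{i-1}}$, hence $\alpha_{i-1} \leq \log(1/\nu_j) < \alpha_i$. Partitioning the above sum over $i = 1, \ldots, n$ and using $\log(1/\nu_j) \leq \alpha_i$ yields, after a short summation-by-parts identity, the step-function bound $\sum_{i=1}^n r_{i-1}(\alpha_i - \alpha_{i-1})$; using instead $\log(1/\nu_j) \geq \alpha_{i-1}$ produces $\sum_{i=1}^n r_i(\alpha_i - \alpha_{i-1})$, which together with $\hhat(\overline M(-\alpha_n)) \geq 0$ gives the lower bound (explaining the asymmetry between the two stated inequalities).

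The passage from $\hhat$ to $\hsefhat$ is obtained by applying the already proved inequalities to $\overline M(\alpha)$ and sending $\alpha \to 0^-$, using $\hsefhat(\overline M) = \lim_{\alpha \to 0^-} \hhat(\overline M(\alpha))$. The main obstacle is controlling the error term so that it is independent of $n$: a naive telescoping via Proposition~\ref{norm1} accumulates $n$ copies of an $r_0 \log 3$ error and becomes useless once $n$ is large. The key is to invoke Gillet--Soul\'e only at the two endpoints $\overline M$ and $\overline M(-\alpha_n)$, so the intermediate filtration enters purely through the combinatorial partition of the successive minima rather than through repeated applications of the arithmetic Minkowski estimate.
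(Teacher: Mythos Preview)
The paper does not supply its own proof of this proposition: it is quoted verbatim from \cite[Proposition~2.3]{YZ1}, with only the one-line remark that it ``is based on the successive minima of Gillet--Soul\'e \cite{GS1}.'' Your approach via the logarithmic successive minima is therefore exactly the intended one, and your outline is essentially correct: the identification $r_i = \#\{j : \nu_j \leq e^{-\alpha_i}\}$ is the right translation, and the Abel-summation step converting $\sum_i (r_{i-1}-r_i)\alpha_i + r_n\alpha_n$ into $\sum_i r_{i-1}(\alpha_i-\alpha_{i-1})$ (and its companion for the lower bound) is the combinatorial heart of the argument.

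One small refinement is worth noting. For the lower bound you invoke Gillet--Soul\'e at both endpoints and then discard $\hhat(\overline M(-\alpha_n)) \geq 0$; this costs you two copies of the error term and would produce $4r_0\log r_0 + 2r_0\log 3$ rather than the stated $2r_0\log r_0 + r_0\log 3$. The sharper route is to apply Gillet--Soul\'e only once, to $\overline M$ itself, and bound the full sum $\sum_{j=1}^{r_0}\log(1/\nu_j)$ from below directly: for $j\leq r_n$ one has $\log(1/\nu_j)\geq \alpha_n$, and for $r_i < j \leq r_{i-1}$ one has $\log(1/\nu_j)\geq \alpha_{i-1}$, which after the same Abel summation gives $\sum_i r_i(\alpha_i-\alpha_{i-1})$ with only a single error contribution. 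This accounts for the factor-of-two asymmetry between the constants in the two inequalities.
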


\subsubsection*{Effective sections}

Let $\CX$ be an arithmetic variety, and $\lb=(\CL, \| \cdot \|)$ be a hermitian line bundle over $\CX$.
We introduce the following notation.

Recall that the set of \emph{effective sections} is
$$
\Hhat(\CX ,\lb)= \{ s \in H^0(\CX , \mathcal L): \| s \|_{\sup} \le 1\}.
$$
Define the set of \emph{strictly effective sections} to be
$$
\Hhat_{\rm sef}(\CX ,\lb)= \{ s \in H^0(\CX , \mathcal L): \| s \|_{\sup} < 1\}.
$$
Denote
$$
\hhat(\CX ,\lb)= \log\# \Hhat(\CX ,\lb), \quad \hhat_{\rm sef}(\CX ,\lb)= \log\#
\Hhat_{\rm sef}(\CX ,\lb).
$$
We say that $\lb$ is \emph{effective} (resp. \emph{strictly effective}) if
$\hhat(\CX ,\lb)\neq 0$ (resp. $\hhat_{\rm sef}(\CX ,\lb)\neq0$).

We usually omit $\CX$ in the above notations. For example, $\Hhat(\CX ,\lb)$ is
written as $\Hhat(\lb)$.

Note that $\overline M=(H^0(\CX , \mathcal L), \| \cdot \|_{\sup})$ is a normed
$\ZZ$-module.
The definitions are compatible in that
$$
\Hhat(\lb),\quad \Hsefhat(\lb), \quad \hhat(\lb), \quad \hsefhat(\lb)
$$
are identical to
$$
\Hhat(\mbar),\quad \Hsefhat(\mbar), \quad \hhat(\mbar), \quad \hsefhat(\mbar).
$$
Hence, the results in last section can be applied here.

For example, Proposition \ref{norm1} gives
$$
\hhat_{\rm sef}(\lb)\leq \hhat(\lb) \leq \hhat_{\rm sef}(\lb)
+ h^0(\CL_\QQ)\log 3.
$$

Note that if $\CX$ is also defined over $\Spec \, O_K$ for some number field $K$, then we obtain two projective varieties $\CX_\QQ=\CX \times_{\ZZ}\QQ$ and
$\CX_K=\CX \times_{O_K}K$,
and two line bundles $\CL_\QQ$ and $\CL_K$.
It is easy to have
$$
h^0(\CL_\QQ)=[K:\QQ]h^0(\CL_K).
$$
Moreover, we can define the degree of $\CL_\QQ$ on $\CX_\QQ$  to be $d_\QQ=\deg(\CL_\QQ)=\CL^{n-1}_\QQ$.
Similarly, we have
$$
d_\QQ = [K : \QQ] d_K.
$$

\subsubsection*{Change of metrics}

For any continuous function $f: \CX (\mathbb C) \rightarrow \mathbb R$, denote
$$
\lb(f)=(\mathcal{L}, e^{-f}\| \cdot \|).
$$
In particular,
$\ob(f)=(\mathcal{O}_\CX , e^{-f})$ is the trivial line bundle with the metric
sending the section 1 to $e^{-f}$.
The case $\ob_\CX =\ob(0)$ is exactly the trivial hermitian line bundle on $\CX$.

If $c>0$ is a constant, one has
$$
\hhat(\lb(-c))\leq \hhat(\lb) \leq \hhat(\lb(-c))+h^0(\CL_\QQ)(c+\log 3),
$$
$$
\hhat_{\rm sef}(\lb(-c))\leq \hhat(\lb) \leq \hhat_{\rm
sef}(\lb(-c))+h^0(\CL_\QQ)(c+\log 3).
$$
These also follow from Proposition \ref{norm1}.

\subsubsection*{Base loci}

Let $H$ denote $\Hhat(\lb)$ or $\Hhat_{\rm sef}(\lb)$ in the following.
Consider the natural map
$$
H \times \CL^\vee \longrightarrow \CL \times \CL^\vee \longrightarrow
\mathcal{O}_\CX .
$$
The image of the composition generates an ideal sheaf of $\CO_\CX$.
The zero locus of this ideal sheaf, defined as a closed subscheme of $\CX$, is
called \emph{the base locus of} $H$
in $\CX$. The union of the irreducible components of codimension one of the base
locus is called \emph{the fixed part of} $H$ in $\CX$.

\subsubsection*{Absolute minima}

For any irreducible horizontal arithmetic curve $D$ of $\CX$, define the normalized height
$$
h_{\lb}(D)=\frac{\widehat\deg(\lb|_ D)}{\deg D_{\mathbb Q}}.
$$
Define the \emph{absolute minimum} $e_{\lb}$ of $\lb$ to be
$$
e_{\lb}=\inf_{D} h_{\lb}(D).
$$
It is easy to verify that
$$
e_{\lb(\alpha)}=e_{\lb}+\alpha, \quad \alpha\in\RR.
$$

If $\lb$ is nef, the absolute minimum $e_{\lb}\geq 0$, and
 $\lb(-e_{\lb})$ is a nef line bundle whose absolute minimum is zero.
It is a very important fact in our treatment in the following.

We refer to \cite{Zhs1, Zhs2} for more results on the minima of $\lb$ for nef
hermitian line bundles.

\subsection{Volume derivative} \label{section dvol}

Let $\lb$ be a hermitian line bundle on an arithmetic variety $\CX$ of dimension $n$ over $O_K$. 
Recall that the volume derivative is defined by 
$$
\dvol(\lb)= \sup_{(\CX ', \ab)}\mathcal A_K^{n-1},
$$
where $\CX '$ is any arithmetic variety endowed with a birational morphism $\pi:\CX '\to \CX$, and $\ab$ is any nef $\QQ$-line bundle on $\CX '$ such that $\pi^*\lb-\ab$ is effective.
The goal here is to give more interpretations of this basic invariant.

\subsubsection*{Derivative of the arithmetic volume function}

The first result is the following interpretation. 

\begin{lemma} \label{derivative}
For any big hermitian line bundle $\lb$,
$$
\dvol(\lb)=\frac{1}{n[K:\QQ]} \lim_{t\to 0} \frac1t \left(\volhat(\lb(t))-\volhat(\lb)\right).
$$
\end{lemma}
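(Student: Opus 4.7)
The plan is to combine Chen's differentiability theorem for the arithmetic volume function \cite{Ch3} with the arithmetic Fujita approximation of Chen \cite{Ch2} and Yuan \cite{Yu2}, and then evaluate the resulting positive intersection number against the "pure-metric" bundle $\ob(1) = (\mathcal{O}_\CX, e^{-1})$. Note that in additive notation we have $\lb(t) = \lb + t \ob(1)$, since multiplying the metric by $e^{-t}$ is the same as twisting by $\ob(t) = t \cdot \ob(1)$.

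First, I would invoke Chen's differentiability theorem: for any big $\lb$ and any hermitian line bundle $\mb$,
$$
\lim_{t\to 0}\frac{\volhat(\lb + t\mb) - \volhat(\lb)}{t} \;=\; n \,\langle \lb^{n-1}\rangle \cdot \mb,
$$
where the right-hand side denotes the arithmetic positive intersection number. The Fujita approximation gives the representation
$$
\langle \lb^{n-1}\rangle \cdot \mb \;=\; \sup_{(\CX',\ab)} \ab^{n-1}\cdot \pi^*\mb,
$$
with the supremum ranging over the same pairs $(\CX',\ab)$ as in the definition of $\dvol(\lb)$ (birational $\pi:\CX'\to\CX$, nef $\QQ$-line bundle $\ab$ with $\pi^*\lb-\ab$ effective). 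Applying both with $\mb = \ob(1)$ reduces the lemma to the computation
$$
\ab^{n-1}\cdot \pi^*\ob(1) \;=\; [K:\QQ]\cdot \mathcal A_K^{n-1}.
$$

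Next, I would establish this last identity using the standard Arakelov intersection formula: for any smooth function $f$ on $\CX'(\CC)$ and any nef hermitian $\ab$,
$$
\ab^{n-1}\cdot (\mathcal O_{\CX'}, e^{-f}) \;=\; \int_{\CX'(\CC)} f\cdot c_1(\ab)^{n-1}.
$$
Taking $f\equiv 1$ and using that $\CX'(\CC)$ consists of $[K:\QQ]$ conjugate connected components each of complex dimension $n-1$, the integral equals $[K:\QQ]\cdot \mathcal A_K^{n-1}$. Passing the supremum through then yields
$$
n\,\langle \lb^{n-1}\rangle \cdot \ob(1) \;=\; n\,[K:\QQ]\,\dvol(\lb),
$$
and dividing by $n[K:\QQ]$ gives the claimed formula.

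The main technical obstacle is justifying the intersection formula above when $\ab$ is merely a nef $\QQ$-line bundle whose metric may only be continuous and semipositive, rather than smooth. I would handle this by approximating the semipositive continuous metric of $\ab$ by smooth positive ones (as in the framework of S.~Zhang or Moriwaki), applying the smooth version of the intersection formula, and passing to the limit using the continuity of arithmetic intersection numbers with respect to uniform convergence of metrics. A minor additional point is that the two-sided limit in the statement (not just the right derivative, which exists trivially by monotonicity of $\volhat(\lb(t))$ in $t$) is precisely the content of Chen's differentiability theorem for big $\lb$, so no further argument is needed for that.
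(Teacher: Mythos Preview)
Your proof is correct and follows essentially the same approach as the paper: invoke Chen's differentiability theorem with $\mb=\ob(1)$, use the supremum representation of the positive intersection number $\langle\lb^{n-1}\rangle\cdot\ob(1)$, and identify $\ab^{n-1}\cdot\ob(1)$ with $[K:\QQ]\,\mathcal A_K^{n-1}$. You supply more detail than the paper does on this last identification (the integral computation and the approximation of semipositive metrics by smooth ones), which the paper simply takes for granted.
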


The lemma is an example of the differentiation of the arithmetic volume function of Chen \cite{Ch3}. We start with some general notations to introduce the result. 

Denote by $\Pichat(\CX )$ the group of hermitian line bundles on $\CX$, and by $\Pichat(\CX )_\QQ=\Pichat(\CX )\otimes_{\ZZ}\QQ$ the group of hermitian $\QQ$-line bundles on $\CX$. Denote by 
$$
\PPichat(\CX )_\QQ= \varinjlim_{\CX '}\Pichat(\CX ')_\QQ,
$$
where the direct limit is taken over all arithmetic varieties $\CX '$ with a birational morphisms $\CX '\to \CX$, and the transition maps between different $\Pichat(\CX ')$ as just the pull-back of line bundles. 

An element of $\PPichat(\CX )_\QQ$ is said to be effective (resp. big or nef) if some positive multiple of it can be represented by an effective (resp. big or nef) hermitian line bundle on $\CX '$. Denote by $\NNefhat(\CX )_\QQ$, $\BBighat(\CX )_\QQ$ and $\EEffhat(\CX )_\QQ$ respectively the cone of nef, big, and effective elements of $\PPichat(\CX )_\QQ$. 

For two elements $\lb_1$ and $\lb_2$, we say that $\lb_1$ dominates $\lb_2$ if $\lb_1-\lb_2$ is effective. In that case, we write $\lb_1\succ\lb_2$ or $\lb_2\prec\lb_1$.

The volume function extends to $\PPichat(\CX )_\QQ$ by homogeneity, and the intersection pairing extends to $\PPichat(\CX )_\QQ$ naturally. 
In particular, $\volhat(\ab)=\ab^n$ for any $\ab\in \NNefhat(\CX )_\QQ$. 

Let $\lb$ be an element of $\BBighat(\CX )_\QQ$. 
The arithmetic Fujita approximation of Chen \cite{Ch2} and Yuan \cite{Yu1} asserts that
$$
\volhat(\lb)=\sup_{\ab\in \NNefhat(\CX )_\QQ, \ \ab\prec \lb} \volhat(\ab) 
$$

The main result of Chen \cite{Ch3} is the following theorem is as follows. 
For any $\lb\in\BBighat(\CX )_\QQ$ and $\overline{\mathcal M}\in \PPichat(\CX )_\QQ$,
the derivative 
$$
\lim_{t\to 0} \frac1t \left(\volhat(\lb+t\mb)-\volhat(\lb)\right)= 
n\ \langle\lb^{n-1}\rangle\cdot \overline{\mathcal M}.
$$
Here \emph{the positive intersection number} for $\overline{\mathcal M}\in \EEffhat(\CX )_\QQ$ is defined by 
$$
\langle\lb^{n-1}\rangle\cdot \overline{\mathcal M}
:=\sup_{\ab\in \NNefhat(\CX )_\QQ, \ \ab\prec \lb} 
\ab^{n-1} \cdot \overline{\mathcal M}.
$$
It turns out that the positive intersection number is additive in $\overline{\mathcal M}$, and thus extends to any 
$\overline{\mathcal M}\in \PPichat(\CX )_\QQ$ by linearity. 

Go back to the volume derivative. 
Take $\mb=\ob(1)$. 
We immediately have
$$
\lim_{t\to 0} \frac1t \left(\volhat(\lb(t))-\volhat(\lb)\right)= 
n\ \langle\lb^{n-1}\rangle\cdot \ob(1),
$$
where
$$
  \langle\lb^{n-1}\rangle\cdot \ob(1)
= [K:\QQ] \sup_{\ab\in \NNefhat(\CX )_\QQ, \ \ab\prec \lb} 
\mathcal A_K^{n-1}.
$$
This proves the lemma.

\subsubsection*{Interpretation by algebraic linear series}

We can also interpret $\dvol$ as the volume function of certain graded linear series on 
the generic fiber $\CX_K$.
In the following, denote by $\langle  \Hhat(\lb)\rangle_K$ the $K$-linear subspace of $H^0(\CL_K)$ generated by $\Hhat(\lb)$. 

\begin{prop} \label{limit}
For any big hermitian line bundle $\lb$, 
$$
\dvol(\lb)= \lim_{N \to \infty} \frac{\dim_K  \langle  \Hhat(N\lb)\rangle_K}{N^{n-1}/(n-1)!}.
$$
\end{prop}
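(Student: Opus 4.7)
My plan is to sandwich $\dvol(\lb)$ between $V := \lim_N \dim_K W_N \cdot (n-1)!/N^{n-1}$ from both sides, where $W_N := \langle \Hhat(N\lb)\rangle_K$. First, I observe that $\{W_N\}$ forms a graded linear series on the $(n-1)$-dimensional variety $\CX_K$: submultiplicativity of the sup norm, $\|ss'\|_\sup \le \|s\|_\sup\|s'\|_\sup$, gives $\Hhat(N\lb)\cdot \Hhat(M\lb) \subset \Hhat((N+M)\lb)$, so $W_N\cdot W_M \subset W_{N+M}$. Hence the limit $V$ exists by the theory of graded linear series (equivalently, by the arithmetic Okounkov body arguments of \cite{Yu2}).

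For $V \le \dvol(\lb)$, I would use the arithmetic Fujita approximation of \cite{Ch2}, \cite{Yu2}. For each $N$, let $\pi_N\colon \CX_N \to \CX$ be the normalization of the blow-up along the base ideal of $\Hhat(N\lb)$, giving a decomposition $\pi_N^*N\CL = \CM_N + \CE_N$ where $\CE_N$ is the effective exceptional divisor with canonical section $s_{\CE_N}$ and $\CM_N$ is base-point-free. Equipping $\CM_N$ with the upper envelope of Fubini--Study-type metrics defined by the effective sections produces a nef hermitian line bundle $\ab_N$ admitting a canonical bijection $\Hhat(\ab_N) \cdot s_{\CE_N} = \Hhat(\pi_N^*N\lb)$. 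Then $\ab_N/N$ is a nef $\QQ$-hermitian line bundle with $\ab_N/N \prec \pi_N^*\lb$, so the definition of $\dvol$ gives $(\ab_N)_K^{n-1}/N^{n-1} \le \dvol(\lb)$. Applying Theorem \ref{main1} to the nef line bundle $(\ab_N)_K$ on the $(n-1)$-dimensional $\CX_{N,K}$, with $B$ chosen as the pull-back of a fixed big base-point-free bundle on $\CX_K$, yields $h^0((\ab_N)_K) \le (\ab_N)_K^{n-1}/(n-1)! + O(N^{n-2})$; the bound $\lambda_{(\ab_N)_K, B} = O(N)$ follows because $(\ab_N)_K$ is pseudo-effectively dominated by $\pi_{N,K}^*(N\CL_K)$. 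Combined with $\dim_K W_N \le h^0((\ab_N)_K)$, this yields $\dim_K W_N \le N^{n-1}\dvol(\lb)/(n-1)! + O(N^{n-2})$, hence $V \le \dvol(\lb)$.

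For $V \ge \dvol(\lb)$, I would combine Lemma \ref{derivative} with the classical additive Minkowski lower bound (Gillet--Soul\'e). Let $M_N^{\mathrm{sef}} := \langle \Hhat(N\lb)\rangle_\ZZ \subset H^0(N\CL)$ be the sublattice of rank $r_N^{\mathrm{sef}} := [K:\QQ]\dim_K W_N$. Applied to the normed module $\mbar := (M_N^{\mathrm{sef}}, \|\cdot\|_\sup)$, for which $\hhat(\mbar) = \hhat(N\lb)$, Minkowski's second theorem yields $\hhat(\mbar(Nt)) \ge \hhat(\mbar) + r_N^{\mathrm{sef}} \cdot Nt - O(r_N^{\mathrm{sef}}\log r_N^{\mathrm{sef}})$. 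Since the sublattice inclusion gives $\hhat(\mbar(Nt)) \le \hhat(N\lb(t))$, one obtains $\hhat(N\lb(t)) - \hhat(N\lb) \ge Nt \cdot r_N^{\mathrm{sef}} - O(r_N \log r_N)$. Dividing by $N^n/n!$ and letting $N \to \infty$ gives $\volhat(\lb(t)) - \volhat(\lb) \ge t \cdot n[K:\QQ]V$; dividing by $t$, letting $t \to 0^+$, and applying Lemma \ref{derivative} yields $\dvol(\lb) \ge V$.

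The main obstacle lies in the Fujita approximation step, where one must verify both that the envelope metric on $\CM_N$ is semipositive (so that $\ab_N$ is arithmetic nef) and that it realises the identification $\Hhat(\ab_N) \cdot s_{\CE_N} = \Hhat(\pi_N^*N\lb)$; these are the technical heart of \cite{Ch2} and \cite{Yu2}. Securing the uniform $O(N^{n-2})$ error in Theorem \ref{main1} also requires choosing the auxiliary big base-point-free bundle $B$ compatibly across all $N$, which can be arranged by passing to a sufficiently large common blow-up.
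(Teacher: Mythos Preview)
Your overall strategy---sandwiching $V$ and invoking Lemma~\ref{derivative}---is sound, and it is genuinely different from the paper's route. The paper works entirely inside the Okounkov-body formalism of Boucksom--Chen \cite{BC}: it forms the concave transform $G_{\lb}$ on $\Delta(\CL_K)$, writes $\frac{1}{n![K:\QQ]}\volhat(\lb(t))=\int_{\Delta(-t)}(G_{\lb}+t)$, and reads off the derivative as $\vol(\Delta(0))=\lim_N \dim_K W_N/N^{n-1}$ in one stroke. Your approach is more elementary but relies on Lemma~\ref{derivative} (hence on Chen's differentiability theorem) as a black box.

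However, as written your two halves prove the \emph{same} inequality. In your second paragraph you announce ``For $V\ge \dvol(\lb)$,'' but the chain
\[
\hhat(N\lb(t))\;\ge\;\hhat(\mbar(Nt))\;\ge\;\hhat(N\lb)+r_N^{\mathrm{sef}}\cdot Nt - O(r_N^{\mathrm{sef}}\log r_N^{\mathrm{sef}})
\]
(with $t>0$) yields $\volhat(\lb(t))-\volhat(\lb)\ge nt[K:\QQ]V$ and hence $\dvol(\lb)\ge V$---exactly as you write in your own last clause. That is the direction already handled by your Fujita/Theorem~\ref{main1} argument, so the sandwich never closes.

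The repair is to run the sublattice argument with $t<0$. For $s>0$ one has $\Hhat(N\lb(-s))\subset \Hhat(N\lb)\subset M_N^{\mathrm{sef}}$, so $\hhat(N\lb(-s))=\hhat(\mbar(-Ns))$ on the nose. Proposition~\ref{norm1}(1), applied to the rank-$r_N^{\mathrm{sef}}$ module $\mbar$, gives
\[
\hhat(N\lb)=\hhat(\mbar)\;\le\;\hhat(\mbar(-Ns))+r_N^{\mathrm{sef}} Ns + r_N^{\mathrm{sef}}\log 3
\;=\;\hhat(N\lb(-s))+r_N^{\mathrm{sef}} Ns + r_N^{\mathrm{sef}}\log 3,
\]
whence $\volhat(\lb)-\volhat(\lb(-s))\le ns[K:\QQ]V$ and, letting $s\to 0^+$ in Lemma~\ref{derivative}, $\dvol(\lb)\le V$. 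In fact this $t<0$ half together with your $t>0$ half already delivers both inequalities, making the Fujita-approximation paragraph superfluous. With this correction your argument is complete and gives a nice hands-on alternative to the paper's Okounkov-body computation.
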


This essentially follows from the construction of the arithmetic Fujita approximation by Chen \cite{Ch2}. Here we give another interpretation in the terminology of Boucksom--Chen \cite{BC}, since it contains more information.

We first introduce some notations. 
For any $t\in \RR$, denote 
$$
R(t)= \bigoplus_{N= 0}^\infty \langle  \Hhat( N\lb_{-t} )\rangle_K.
$$
Here we write $\lb_{-t}$ for $\lb(-t)$ to avoid the confusion by the multiplication by $N$. 
Then $R(t)$ is a graded subring of the section ring
$$
R(\CL_K)= \bigoplus_{N= 0}^\infty  H^0( N\CL_K ).
$$ 

Fix an algebraic point of $X$ and a local coordinate at this point. 
By the construction in Lazarsfeld--Musta\c t\v a \cite{LM}, we obtain the Okounkov body 
$\Delta(\CL_K)$ of $\CL_K$, which is a convex body in $\RR^{n-1}$. 
Furthermore, we also have an Okounkov body $\Delta(t)$ for each graded ring $R(t)$. 
Note that $\Delta(t)\subset \Delta(\CL_K)$ by definition. 

From the construction of $\Delta(t)$, we can see that 
$$
\vol(\Delta(0))=\lim_{N \to \infty} \frac{\dim_K  \langle  \Hhat( N\lb )\rangle_K}{N^{n-1}}.
$$
A property hidden in the equality is that the right-hand side converges.

As in \cite{BC}, define a function
$$
G_{\lb}: \Delta(\CL_K) \longrightarrow \RR
$$
by 
$$
G_{\lb}(x):= \sup \{t\in \RR: x\in \Delta(t)\}.  
$$
Then the main result of \cite{BC} gives
$$
\frac{1}{n![K:\QQ]}\volhat(\lb)=\int_{\Delta(\CL_K)} \max\{G_{\lb}(x),0\}\ dx. 
$$

By definition, 
$$\Delta(0)=\{  x\in \Delta(\CL_K): G_{\lb}(x)\geq 0 \}.$$
It follows that we can just write 
$$
\frac{1}{n![K:\QQ]}\volhat(\lb)=\int_{\Delta(0)}  G_{\lb}(x) \ dx. 
$$
By definition, $G_{\lb(t)}(x)=G_{\lb}(x)+t$. 
It follows that 
\begin{eqnarray*}
\frac{1}{n![K:\QQ]}\volhat(\lb(t))
&=&\int_{\Delta(-t)} (G_{\lb}(x)+t) dx\\
&=&\int_{\Delta} G_{\lb}(x) dx + \vol(\Delta(0)) t+o(t) . 
\end{eqnarray*}
Hence, 
$$
\frac{1}{n![K:\QQ]} \lim_{t\to 0} \frac1t \left(\volhat(\lb(t))-\volhat(\lb)\right)
=\vol(\Delta(0)), 
$$
which is essentially the equality in the proposition.

\subsection{Filtration of hermitian line bundles}

This section is the high-dimensional analogue of the construction in \cite{YZ1}.
One difference is that we need blow-ups to finish the decomposition in high-dimensional case.

\subsubsection*{The basic construction}

Our goal of this section is to introduce a basic decomposition of hermitian line
bundles on arithmetic varieties, as high-dimensional generalizations of
\cite[Theorem 3.2]{YZ1}. It is a decomposition keeping $\Hhat_{\rm sef}(\lb)$.

\begin{theorem}\label{degred1}

Let $\CX$ be a normal arithmetic variety,
and $\lb$ be a hermitian line bundle with $\hhat_{\rm sef}(\lb)\neq 0$.
Then there exist a birational morphism $\pi: \CX_1 \to \CX$ and a decomposition
$$
\pi^*\lb=\lb_1+\eb_1
$$
where $\eb_1$ is an effective hermitian line bundle on $\CX$, and $\lb_1$ is a nef hermitian line bundle on
$\CX_1$ satisfying the following conditions:
\begin{itemize}
\item There is an effective section $e\in \Hhat(\eb)$ such that
$\mathrm{div}(e)$ is the base locus of $\Hhat_{\rm sef}(\pi^*\lb)$ in $\CX_1$.
\item The map $\CL_1\to \pi^*\CL$ defined by tensoring with $e$ induces a bijection
$$\Hhat_{\rm sef}(\lb_1)  \stackrel{\otimes e}{\longrightarrow}\Hhat_{\rm
sef}(\pi^*\lb).$$
Furthermore, the bijection keeps the supremum norms, i.e.,
$$
\|s\|_{\sup} = \|e\otimes s\|_{\sup}, \quad \forall \ s\in \Hhat_{\rm
sef}(\lb_1).
$$
\end{itemize}
\end{theorem}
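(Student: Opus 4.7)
The plan is to separate the base locus of $\Hsefhat(\lb)$ via a normalized blow-up and then equip the resulting decomposition with hermitian metrics built intrinsically from the strictly effective sections. This is the high-dimensional analogue of the arithmetic surface construction in \cite{YZ1}, where the base locus is already a divisor and no blow-up is needed.

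Concretely, let $Z\subset\CX$ be the closed subscheme defined by the image of the evaluation map $\Hsefhat(\lb)\otimes\CL^{\vee}\to\CO_{\CX}$, and let $\pi\colon\CX_1\to\CX$ be the normalization of the blow-up of $\CX$ along $Z$. The pullback of the ideal sheaf of $Z$ is invertible on $\CX_1$, equal to $\CO_{\CX_1}(-E_1)$ for an effective Cartier divisor $E_1$. Set $\CE_1:=\CO_{\CX_1}(E_1)$ with canonical section $e$ (so $\div(e)=E_1$), and $\CL_1:=\pi^*\CL\otimes\CE_1^{\vee}$, giving the line-bundle identity $\pi^*\CL\cong\CL_1\otimes\CE_1$ via multiplication by $e$. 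Normality of $\CX$ and birationality of $\pi$ yield $H^0(\CX,\CL)\cong H^0(\CX_1,\pi^*\CL)$, and the pullback metric identifies $\Hsefhat(\lb)\cong\Hsefhat(\pi^*\lb)$. Every $s\in\Hsefhat(\pi^*\lb)$ vanishes along $E_1$ by construction of $Z$, so factors uniquely as $s=e\otimes s'$ with $s'\in H^0(\CX_1,\CL_1)$; the family $\{s'\}$ has no common zero and hence base-point-freely generates $\CL_1$ on all of $\CX_1$.

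The core step is the choice of metrics. Because $\pi^*\lb=\lb_1+\eb_1$ is an identity of hermitian line bundles, fixing the pullback metric on $\pi^*\CL$ together with any metric on $\CE_1$ forces the metric on $\CL_1$ through the pointwise identity $\|e(x)\|_{\eb_1}\cdot\|s'(x)\|_{\lb_1}=\|(e\otimes s')(x)\|_{\pi^*\lb}$. The natural choice is to take on $\CL_1$ the Fubini--Study-type (quotient) metric induced by the base-point-free generating system $\{s'\}$, weighted by the sup-norms $\|s\|_{\pi^*\lb,\sup}$, so that the identification $s'\mapsto e\otimes s'$ becomes a sup-norm isometry on $\Hsefhat(\lb_1)$. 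This metric is continuous and semipositive on $\CL_1$, and the resulting metric on $\CE_1$ extends continuously across $E_1$ at the correct vanishing order. After a harmless global rescaling, one verifies that $\|e\|_{\sup}\le 1$ (so $\eb_1$ is effective), that $\lb_1$ is arithmetically nef (archimedean semipositivity is built in; nonnegativity of heights on every horizontal arithmetic curve of $\CX_1$ follows from the existence of the generating system of strictly effective sections of $\lb_1$), and that $\div(e)=E_1$ equals the base locus of $\Hsefhat(\pi^*\lb)$.

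The principal obstacle is the simultaneous realization of all four properties: continuous extension of the metric across $E_1$ (where $e$ vanishes), archimedean semipositivity making $\lb_1$ nef, effectivity of $\eb_1$, and exact sup-norm preservation on $\Hsefhat(\lb_1)$. Any single property is routine, but aligning them requires building the metric intrinsically from the strictly effective sections rather than from an arbitrary smooth metric on $\CE_1$; in particular, sup-norm preservation forces the quotient-metric construction because an ad hoc smooth choice would only yield an inequality. This decomposition is the arithmetic counterpart of the algebraic base-locus separation used in Theorem~\ref{algdecomposition}, and it is the load-bearing construction for the iterated rescaling procedure in the remainder of this section.
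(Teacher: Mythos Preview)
Your approach is essentially the paper's: normalized blow-up of the base locus of $\Hsefhat(\lb)$, the decomposition $\pi^*\CL=\CL_1+\CE_1$, and a metric built intrinsically from the strictly effective sections---the paper simply writes the formula on $\CE_1$ first, setting $\|e(x)\|_{\CE_1}=\max\{\|s(x)\|/\|s\|_{\sup}:0\neq s\in\Hsefhat(\pi^*\lb)\}$, which is exactly the dual of your weighted quotient metric on $\CL_1$. Two small caveats: exact sup-norm preservation forces the $\ell^\infty$ (max) version of the quotient metric rather than the smooth $\ell^2$ Fubini--Study form, and with that choice one has $\|e\|_{\sup}=1$ on the nose, so no global rescaling is needed.
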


The above result is a generalization of \cite[Theorem 3.2]{YZ1} for the arithmetic surface case. One can also obtain a similar decomposition keeping $\Hhat(\lb)$, as a generalization of \cite[Theorem 3.1]{YZ1} .

The proof of the theorem is very similar to that of \cite[Theorem 3.2]{YZ1}, except that we need to blow-up the base loci in the high-dimensional case to make them Cartier divisors.
In the following, we sketch it briefly.

Denote by $Z$ the base locus of $\Hsefhat(\CX , \lb)$ on $\CX$.
Let $\pi: \CX_1 \to \CX$ be the normalization of the blow-up of $\CX$ with center $Z$.
Denote by $\CE_1$ the line bundle on $\CX_1$ associated to $Z$, and by $e \in
H^0(\CX_1, \CE_1)$ the section defining $Z$. Define the line bundle $\CL_1$ on $\CX_1$ by the
decomposition
$$
\pi^*\CL=\CL_1+\CE_1.
$$

Define the metric $\|\cdot\|_{\CE}$ of $\CE$ at $x \in \CX_1(\CC)$ by
$$
\|e(x)\|_{\CE}=\max \{ \|s(x)\|/\| s\|_{\sup}: s\in \Hhat_{\rm sef}(\CX_1,\pi^*\lb), \ s\neq 0
\}.
$$
It is easy to see that $\|e\|_{\CE, \sup}= 1$.
Define the metric $\|\cdot\|_{\CL_1}$ on $\CL_1$ by the decomposition
$$
\pi^*\lb=(\CE,\|\cdot\|_{\CE})+(\CL_1,\|\cdot\|_{\CL_1}).
$$
Set $\eb=(\CE,\|\cdot\|_{\CE})$ and $\lb_1=(\CL_1,\|\cdot\|_{\CL_1})$.
Then the decomposition $\pi^*\lb=\eb+\lb_1$ satisfies the theorem.
The proof is similar to that of \cite[Theorem 3.2]{YZ1}, and we omit it here.

\subsubsection*{The filtration}

In this section, the plan is to write down a filtration of hermitian line bundles by performing the above decomposition repeatedly.

Let $\lb$ be a nef hermitian line bundle.
We are going to apply Theorem \ref{degred1} to reduce $\lb$ to ``smaller'' nef
line bundles.
The problem is that the fixed part of $\lb$ may be empty, and then
Theorem \ref{degred1} is a trivial decomposition.
The idea is to enlarge the metric of $\lb$ by constant multiples to create base
points.
To keep the nefness, the largest constant multiple we can use gives the case
that the absolute minimum is 0.
The following proposition says that the situation exactly meets our requirement.

\begin{prop}\label{fixedpart}

Let $\CX$ be an arithmetic variety,
and $\lb$ be a nef hermitian line bundle on
$\CX$ satisfying
$$\hhat_{\rm sef}(\lb)>0, \quad e_{\lb}=0.$$
Then the base locus of $\Hhat_{\rm sef}(\lb)$
contains a non-empty horizontal part.
\end{prop}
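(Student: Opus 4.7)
The plan is to argue by contradiction: assume the base locus $B$ of $\Hhat_{\rm sef}(\lb)$ consists only of vertical components, and derive a uniform positive lower bound on $h_{\lb}(D)$ over all irreducible horizontal arithmetic curves $D\subset\CX$, contradicting $e_{\lb}=0$.

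The key preliminary observation is that $\Hhat_{\rm sef}(\lb)$ is a finite set, since it consists of lattice points in the open unit sup-norm ball of the finitely generated abelian group $H^0(\CX,\CL)$. By hypothesis it is nonempty, so
$$
\epsilon_0 := \min\bigl\{ -\log\|s\|_{\sup} : s\in \Hhat_{\rm sef}(\lb),\ s\neq 0 \bigr\}
$$
is strictly positive and depends only on $\lb$. Next, under the contradiction assumption, every component of $B$ is contained in a single fiber of $\CX\to\Spec O_K$, whereas any horizontal $D$ dominates $\Spec O_K$. Hence $D\not\subset B$, so some $s\in\Hhat_{\rm sef}(\lb)$ restricts to a nonzero section on $D$.

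I would then compute
$$
\deghat(\lb|_D)=\deghat\bigl(\divhat(s|_D)\bigr),
$$
whose finite part is the effective divisor $\div(s|_D)$ on the normalization of $D$ and hence contributes nonnegatively, while at each complex point $x$ of $D$ one has $-\log\|s(x)\|\geq\epsilon_0$. Summing the archimedean contributions with the standard Arakelov normalizations gives $\deghat(\lb|_D)\geq \epsilon_0\cdot\deg(D_\QQ)$, hence
$$
h_{\lb}(D)\geq\epsilon_0>0
$$
independently of $D$, contradicting $e_{\lb}=\inf_D h_{\lb}(D)=0$. The only delicate point is keeping the archimedean normalization consistent so that the sum $\sum_{x\in D(\CC)}-\log\|s(x)\|$ contributes exactly $\epsilon_0\deg(D_\QQ)$ to the normalized height; once the finiteness of $\Hhat_{\rm sef}(\lb)$ secures $\epsilon_0>0$, the remainder of the argument is essentially formal.
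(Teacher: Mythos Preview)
Your proposal is correct and follows essentially the same approach as the paper's proof: both argue by contradiction, use that a horizontal curve $D$ cannot lie in a purely vertical base locus to pick a strictly effective section $s$ not vanishing on $D$, and then bound $h_{\lb}(D)\ge -\log\|s\|_{\sup}\ge \epsilon_0>0$ using finiteness of $\Hhat_{\rm sef}(\lb)$. The paper's write-up is slightly more compressed, but the logic is identical.
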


\begin{proof}
We prove this by contradiction. Suppose the base locus of $\Hhat_{\rm sef}(\lb)$ is empty or vertical. Then for any horizontal arithmetic curve $D$ on $\CX$, we can find a nonzero section $s \in \Hsefhat(\lb)$ such that
$s$ does not vanish on $D$. Thus one has
$$
h_{\lb}(D)=\frac{1}{\deg(D_\QQ)} ({\rm div}(s)\cdot D-\log\|s\|(D(\CC))) \geq
-\log\|s\|_{\sup}.
$$
Therefore,
$$e_{\lb} \geq \min_{s\in  \Hsefhat(\lb)-\{0\}} (-\log\|s\|_{\sup})>0.$$
It contradicts to our assumption.
\end{proof}

From the above proposition,  we have the following total construction.

\begin{theorem}\label{degred}

Let $\CX_0$ be a regular arithmetic variety, and $\lb_0$ a nef hermitian line
bundle on $\CX_0$. There exist an integer $N \geq 0$, and a sequence of quadruples
$$
\{(\CX_i, \lb_i, \eb_i, c_i): \ i=0,1, \cdots, N\}
$$
where  $\CX_i$ is a normal arithmetic variety, and $\lb_i$ and $\eb_i$ are hermitian line bundles on $\CX_i$ satisfying the following properties:
\begin{itemize}
\item $(\CX_0, \lb_0, \eb_0, c_0)=(\CX_0, \lb_0, \overline\CO_\CX , e_{\lb_0})$.
\item For any $i=0,\cdots, N$, the constant $c_i=e_{\lb_{i}}\geq 0$ is the absolute minimum of $\lb_{i}$.
\item For any $i=0, \cdots, N-1$, $\pi_i: \CX_{i+1} \to \CX_{i}$ is a birational morphism and
    $$
    \pi^*_i \lb_{i}(-c_{i})=  \lb_{i+1}+ \eb_{i+1}
    $$
    is a decomposition of $\pi^*_i\lb_{i}(-c_i)$ as in Theorem \ref{degred1}.
\item $\hhat_{\rm sef}(\CX , \lb_{i}(-c_i))>0$ for any $i=0, \cdots, N-1$.
\item $\hsefhat(\lb_N(-c_N))=0$.
\end{itemize}
The following are some properties by the construction:
\begin{itemize}
\item For any $i=0,\cdots, N$, $\lb_i$ is nef and every $\eb_i$ is effective.
\item $\hsefhat(\lb_0) \ge \hsefhat(\lb_1) > \hsefhat(\lb_2) > \cdots > \hsefhat(\lb_N)
> \hsefhat(\lb_N(-c_N))=0$.
\item For any $i=0,\cdots, N-1$, there is a section $e_{i+1}\in
\Hhat(\eb_{i+1})$ inducing a bijection
$$
\Hsefhat (\lb_{i+1})  \longrightarrow  \Hsefhat(\pi^*_i\lb_{i}(-c_{i}))
$$
which keeps the supremum norms.
\end{itemize}

\end{theorem}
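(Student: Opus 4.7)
The plan is to build the quadruples inductively via Theorem \ref{degred1}. Starting with $(\CX_0, \lb_0, \overline\CO_{\CX_0}, e_{\lb_0})$, suppose the $i$-th datum has been produced with $\lb_i$ nef on a normal arithmetic variety $\CX_i$. I would set $c_i := e_{\lb_i} \geq 0$; then $\lb_i(-c_i)$ is still nef and has absolute minimum zero. If $\hsefhat(\lb_i(-c_i)) = 0$, the construction terminates with $N = i$. Otherwise Proposition \ref{fixedpart} applied to $\lb_i(-c_i)$ guarantees that the base locus of $\Hsefhat(\lb_i(-c_i))$ contains a horizontal component, so Theorem \ref{degred1} applies nontrivially to $\lb_i(-c_i)$ and yields a normal $\CX_{i+1}$, a birational $\pi_i: \CX_{i+1} \to \CX_i$, a decomposition $\pi_i^*\lb_i(-c_i) = \lb_{i+1} + \eb_{i+1}$ with $\lb_{i+1}$ nef and $\eb_{i+1}$ effective, and a section $e_{i+1} \in \Hhat(\eb_{i+1})$ whose divisor is the base locus of the pullback and which induces a sup-norm-preserving bijection $\Hsefhat(\lb_{i+1}) \cong \Hsefhat(\pi_i^*\lb_i(-c_i))$.

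Most of the listed properties are then immediate from Theorem \ref{degred1} and the construction: nefness of $\lb_{i+1}$, effectivity of $\eb_{i+1}$, existence of $e_{i+1}$ and its norm-preserving bijection, and the equality $\hsefhat(\lb_{i+1}) = \hsefhat(\lb_i(-c_i))$ (since $\pi_i$ is birational between normal varieties, pullback identifies $\Hsefhat(\pi_i^*\lb_i(-c_i))$ with $\Hsefhat(\lb_i(-c_i))$ as normed $\ZZ$-modules). The weak inequality $\hsefhat(\lb_0) \geq \hsefhat(\lb_1)$ follows from $c_0 \geq 0$ alone, since tightening a metric only shrinks $\Hsefhat$.

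The main obstacle is establishing the strict decrease $\hsefhat(\lb_i) > \hsefhat(\lb_{i+1})$ for $i \geq 1$, which in turn forces termination. I would first show that the base locus of $\Hsefhat(\lb_i)$ is empty for $i \geq 1$: since every strictly effective section of $\pi_{i-1}^*\lb_{i-1}(-c_{i-1})$ has the form $e_i \otimes s$ with $s \in \Hsefhat(\lb_i)$, the base ideal of $\Hsefhat(\lb_i)$ multiplied by $(e_i)$ equals $(e_i)$ (the base locus on the pullback side), which forces the base ideal to be the unit ideal. The contrapositive of Proposition \ref{fixedpart} then yields $c_i = e_{\lb_i} > 0$. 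To upgrade $c_i > 0$ to strict inclusion $\Hsefhat(\lb_i(-c_i)) \subsetneq \Hsefhat(\lb_i)$, I would argue by contradiction: if equality held, the finite maximum $\mu_0 := \max_{s \in \Hsefhat(\lb_i)} \|s\|_{\sup}$ would satisfy $\mu_0 < e^{-c_i}$; since the base locus of $\Hsefhat(\lb_i)$ is empty, every horizontal curve $D$ admits some $s \in \Hsefhat(\lb_i)$ with $s|_D \neq 0$, and the inequality $h_{\lb_i}(D) \geq -\log\|s\|_{\sup}$ used in the proof of Proposition \ref{fixedpart} would give $c_i = \inf_D h_{\lb_i}(D) \geq -\log\mu_0 > c_i$, a contradiction. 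Since $\hsefhat$ then equals the logarithm of a strictly decreasing sequence of positive integer cardinalities, the construction must terminate after finitely many iterations with $\hsefhat(\lb_N(-c_N)) = 0$.
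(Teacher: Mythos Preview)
Your proposal is correct and follows essentially the same inductive construction as the paper. The only difference is in how you establish the strict decrease $\hsefhat(\lb_i) > \hsefhat(\lb_{i+1})$ for $i\geq 1$: the paper simply observes that $\Hsefhat(\lb_i)$ is base-point-free while $\Hsefhat(\lb_i(-c_i))$ has a nonempty (horizontal) base locus by Proposition~\ref{fixedpart}, so the two finite sets cannot coincide; your $\mu_0$-argument reaches the same conclusion by unwinding the proof of Proposition~\ref{fixedpart} directly, which is slightly more roundabout (and makes the intermediate step $c_i>0$ unnecessary) but equally valid.
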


\begin{proof}
The quadruple $(\CX_{i+1}, \lb_{i+1}, \eb_{i+1}, c_{i+1})$ is obtained by decomposing $\pi^*\lb_{i}(-c_{i})$.
From our construction in Theorem \ref{degred1}, one can see that for $i=1, \cdots, N$,
$\Hsefhat(\lb_i)$ has no base locus but $\Hsefhat(\lb_i(-c_i))$ has. It implies that
$$
\hsefhat(\lb_i) > \hsefhat(\lb_i(-c_i)) = \hsefhat(\lb_{i+1}).
$$
The process terminates if $\hhat_{\rm sef}(\CX , \lb_{i}(-c_i))=0.$
It always terminates since $\hsefhat(\lb_0)$ is finite.
\end{proof}

\subsubsection*{Numerical inequalities}
Recall that Theorem \ref{degred} starts with a nef line bundle $\lb_0$
and constructs the sequence
$$
(\CX_i, \lb_i, \eb_i,c_i), \quad i=0, \cdots, N.
$$
Here $\lb_i$ is nef and $\eb_i$ is effective, and $c_i=e_{\lb_i}\geq 0$.
In particular, $\lb_{i}(-c_{i})$ is still nef.
For any $i=0,\cdots, N-1$, the decomposition
$$
\pi^*_i \lb_{i}(-c_i)=  \lb_{i+1}+ \eb_{i+1}
$$
gives a bijection
$$
\Hhat_{\rm sef}(\lb_{i+1})  \longrightarrow  \Hhat_{\rm sef}(\pi^*_i \lb_{i}(-c_{i})),
$$
which is given by tensoring some distinguished element $e_{i+1} \in \Hhat (\eb_{i+1})$.
It is very important that the bijection keeps the supremum norms.
In the following, we denote
$$
\lb_{i}'=\lb_{i}(-c_i), \quad i=0,\cdots, N.
$$
We also denote $d_{i}=\deg(\CL_{i, \mathbb Q})=\CL^{n-1}_{i, \QQ}$ and $r_i=h^0(\CL_{i,\QQ})$.

\begin{prop} \label{onestep}
For any $j=0,\cdots, N$, one has
\begin{eqnarray*}
\lb_0^n & \geq &  \lb_j'^n + n \sum_{i=0}^{j} d_i c_i,  \\
\hsefhat(\lb_0) &\leq& \hsefhat(\lb_j')+
\sum_{i=0}^{j} r_{i} c_i +4r_0 \log r_0+ 2r_0 \log 3.
\end{eqnarray*}

\end{prop}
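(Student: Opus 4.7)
The first inequality follows by telescoping two numerical identities. On one hand, since $\lb_i = \lb_i' + \ob(c_i)$ and the trivial hermitian class $\ob(c_i)$ satisfies $\ob(c_i)^2 \cdot (-) = 0$ in the arithmetic Chow ring, the binomial expansion collapses to
$$
\lb_i^n = \lb_i'^n + n c_i \CL_{i,\QQ}^{n-1} = \lb_i'^n + n c_i d_i.
$$
On the other hand, the decomposition $\pi_i^*\lb_i' = \lb_{i+1} + \eb_{i+1}$ (with $\lb_{i+1}$ nef and $\eb_{i+1}$ effective) combined with the projection formula gives $\lb_i'^n = (\lb_{i+1}+\eb_{i+1})^n$; every mixed term $\lb_{i+1}^{n-k}\cdot\eb_{i+1}^k$ is nonnegative, so $\lb_i'^n \ge \lb_{i+1}^n$. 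Iterating both identities alternately from $i=0$ through $i=j$ yields $\lb_0^n \ge \lb_j'^n + n\sum_{i=0}^j c_i d_i$.

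For the second inequality, I would apply Proposition \ref{secred} (in its $\hsefhat$-version) in a single step to the normed $\ZZ$-module $\overline M := (H^0(\CX_0,\CL_0),\|\cdot\|_{\sup,\lb_0})$ with the cumulative shifts $\alpha_0 = 0$ and $\alpha_i := c_0 + c_1 + \cdots + c_{i-1}$ for $1 \le i \le j+1$. The key ingredient is a $\ZZ$-linear injection
$$
\Phi_i : \Hsefhat(\lb_0(-\alpha_i)) \hookrightarrow \Hsefhat(\lb_i),
$$
constructed inductively by setting $s_0 = s$ and defining $s_{k+1}$ as the unique section with $\pi_k^*s_k = e_{k+1} \otimes s_{k+1}$. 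Divisibility holds at each step because the sup-norm preservation of the bijection of Theorem \ref{degred1} iterates to
$$
\|s_k\|_{\sup,\lb_k} = e^{\alpha_k}\|s\|_{\sup,\lb_0} < e^{\alpha_k - \alpha_i} \le e^{-c_k},
$$
placing $s_k$ in $\Hsefhat(\lb_k')$, whose sections all vanish along the base locus cut out by $e_{k+1}$. Running the same procedure but stopping one step earlier at the top index $i = j+1$ gives a bijection $\Hsefhat(\lb_0(-\alpha_{j+1})) \leftrightarrow \Hsefhat(\lb_j')$, so $\hsefhat(\lb_0(-\alpha_{j+1})) = \hsefhat(\lb_j')$; and the injectivity of $\Phi_i$ bounds the rank of the $\ZZ$-submodule of $H^0(\CL_0)$ generated by $\Hsefhat(\lb_0(-\alpha_i))$ by $h^0(\CL_{i,\QQ}) = r_i$.

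Feeding these inputs into Proposition \ref{secred} with $\alpha_i - \alpha_{i-1} = c_{i-1}$, reindexing, and using monotonicity of $x\log x$ to dominate the $r_0$-term there by $4r_0\log r_0 + 2r_0\log 3$, delivers the second inequality (the case $\hsefhat(\lb_0) = 0$ being vacuous). The main delicate point is the inductive construction of $\Phi_i$: one must check at every stage that the divisibility by $e_{k+1}$ is legitimate and that the strict sup-norm bound propagates through the whole chain; both issues are resolved by the norm-preservation clause and the base-locus description already packaged into Theorem \ref{degred1}, so the remaining work is bookkeeping.
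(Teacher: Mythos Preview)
Your overall strategy for both inequalities matches the paper's: the first is a telescope of arithmetic intersection numbers along the filtration, and the second is an application of Proposition~\ref{secred} with the cumulative shifts $\alpha_i$ and the injections $\Phi_i$ built by iterated division by $e_{k+1}$. Your treatment of the second inequality is correct and is precisely the method of \cite[Proposition~4.5]{YZ1} to which the paper defers; the norm bookkeeping and the bijection $\Hsefhat(\lb_0(-\alpha_{j+1}))\leftrightarrow\Hsefhat(\lb_j')$ are exactly right.

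There is, however, a genuine flaw in your argument for the first inequality. You assert that in the binomial expansion of $(\lb_{i+1}+\eb_{i+1})^n$ every mixed term $\lb_{i+1}^{\,n-k}\cdot\eb_{i+1}^{\,k}$ is nonnegative. This fails for $k\ge 2$: nefness of $\lb_{i+1}$ controls intersections against \emph{effective} cycles, but $\eb_{i+1}^{\,k}$ for $k\ge 2$ is not represented by an effective cycle. Since $\eb_{i+1}$ arises from blowing up a base locus, it typically has exceptional components with negative self-intersection; already for $n=2$ one can have $\eb_{i+1}^{\,2}<0$, and for larger $n$ terms such as $\lb_{i+1}^{\,n-2}\cdot\eb_{i+1}^{\,2}$ can be strictly negative.

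The fix is to use the other factorization, exploiting that \emph{both} $\pi_i^*\lb_i'$ and $\lb_{i+1}$ are nef. Writing $A=\pi_i^*\lb_i'$ and $B=\lb_{i+1}$,
\[
\lb_i'^{\,n}-\lb_{i+1}^{\,n}
= A^n-B^n
= (A-B)\cdot\sum_{k=0}^{n-1}A^{k}B^{\,n-1-k}
= \eb_{i+1}\cdot\sum_{k=0}^{n-1}A^{k}B^{\,n-1-k}\ \ge\ 0,
\]
since each summand is an effective class against a product of nef classes. With this correction your telescoping goes through unchanged. The paper combines your two steps into one, passing directly from $\lb_i'^{\,n}$ to $\lb_{i+1}'^{\,n}$ via
\[
\lb_i'^{\,n}-\lb_{i+1}'^{\,n}
=\bigl(\eb_{i+1}+\ob(c_{i+1})\bigr)\cdot\sum_{k=0}^{n-1}(\pi_i^*\lb_i')^{k}\,\lb_{i+1}'^{\,n-1-k}
\ \ge\ n\,d_{i+1}c_{i+1},
\]
which is the same idea organized slightly differently.
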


\begin{proof}
The results can be proved by the method of \cite[Proposition 4.5]{YZ1}.
We only write a proof for the first inequality here.

By construction, we have
$$
\pi^*_i\lb_{i}'=\lb_{i+1}+ \eb_{i+1}=\lb_{i+1}'+ \eb_{i+1}+\ob(c_{i+1}).$$
Here $\lb_{i}'$ and $\lb_{i+1}'$ are nef, and $\eb_{i+1}$ is effective.
It follows that
\begin{eqnarray*}
\lb_{i}'^n-\lb_{i+1}'^n & = & (\eb_{i+1} +\ob(c_{i+1})) \cdot \left(\sum_{k=0}^{n-1} (\pi^*_i \lb'_i)^k \cdot \lb^{n-1-k}_{i+1} \right)\\
& \geq & \left(\sum_{k=0}^{n-1} (\pi^*_i \lb'_i)^k \cdot \lb^{n-1-k}_{i+1} \right) \cdot  \ob(c_{i+1})\\
& \ge & n d_{i+1} c_{i+1}.
\end{eqnarray*}
Summing over $i=0, \cdots, j-1$, we can get
$$
\lb_0'^n  \geq   \lb_j'^n + n \sum_{i=1}^{j} d_i c_i.
$$
Then the conclusion follows from
$$
\lb_0^n= \lb_0'^n - n d_0 c_0.
$$
\end{proof}

Similar to Lemma \ref{algsumai}, we still have the following result.

\begin{lemma} \label{sumci}
In the setting of Theorem \ref{degred}, we have
$$
\lb_0^n\geq d_0\left(n c_0+\sum_{i=1}^N c_i\right) .
$$
\end{lemma}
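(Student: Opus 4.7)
The strategy parallels Lemma \ref{algsumai} in the geometric case, with the fiber class $F$ playing the role of the trivial hermitian line bundle $\ob(1)$. First I set $\tau_i = \pi_i \circ \cdots \circ \pi_{N-1} : \CX_N \to \CX_i$ for $i<N$ and $\tau_N = \mathrm{id}$. Iterating the decomposition $\pi_i^* \lb_i' = \lb_{i+1}' + \ob(c_{i+1}) + \eb_{i+1}$ from Theorem \ref{degred} yields
$$
\tau_0^* \lb_0' \;=\; \lb_N' \;+\; \ob(C) \;+\; \oe,
$$
where $C = c_1 + \cdots + c_N$, the hermitian line bundle $\oe = \sum_{i=1}^N \tau_i^* \eb_i$ is effective, and $\lb_N'$ is nef.

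The core of the argument is then to bound $(\lb_0')^n$ from below. By the projection formula for birational morphisms of arithmetic varieties, $(\lb_0')^n = (\tau_0^* \lb_0')^n$. I would expand this as
$$
(\tau_0^* \lb_0')^{n-1} \cdot \lb_N' \;+\; (\tau_0^* \lb_0')^{n-1} \cdot \ob(C) \;+\; (\tau_0^* \lb_0')^{n-1} \cdot \oe.
$$
The first and third summands are nonnegative, as nef-times-nef and nef-times-effective arithmetic intersections respectively. The middle summand equals $C \cdot d_0$, by exactly the computation appearing in the proof of Proposition \ref{onestep} (applied to the nef class $\tau_0^* \lb_0'$, whose generic fiber has degree $d_0$). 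Hence $(\lb_0')^n \geq d_0 \sum_{i=1}^N c_i$.

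For the final step I would write $\lb_0 = \lb_0' + \ob(c_0)$ and expand $\lb_0^n$ by multilinearity. Since $\ob(c_0)$ has constant metric, its arithmetic first Chern class has vanishing curvature form, so every term in the expansion carrying two or more factors of $\ob(c_0)$ vanishes. Only the linear term survives, giving $\lb_0^n = (\lb_0')^n + n c_0 d_0$. Combining with the previous bound yields
$$
\lb_0^n \;\ge\; n c_0 d_0 + d_0 \sum_{i=1}^N c_i \;=\; d_0\Bigl( n c_0 + \sum_{i=1}^N c_i \Bigr),
$$
as desired. The only delicate point is the arithmetic bookkeeping: the nef-times-effective inequality and the vanishing of higher self-intersections of $\ob(c_0)$. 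Both are standard consequences of the nonnegativity of Green currents coming from effective sections of sup-norm $\le 1$ and of the fact that $dd^c$ of a constant is zero, so I would cite them rather than reprove them.
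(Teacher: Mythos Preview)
Your proof is correct and follows essentially the same approach as the paper's own argument: pull everything back to $\CX_N$, decompose $\tau_0^*\lb_0'$ as $\lb_N' + \ob(C) + \oe$, expand $(\tau_0^*\lb_0')^{n-1}$ against this sum to extract the $d_0 C$ term while dropping the nonnegative nef and effective contributions, and then use the identity $\lb_0^n = (\lb_0')^n + n c_0 d_0$. The paper in fact retains the intermediate bound $(\lb_0')^n \ge \lb_N'^n + d_0 C$ before discarding $\lb_N'^n$, but this is a cosmetic difference only.
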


\begin{proof}
Let $\fb_i$ be the pull back of $\eb_i$ from $\CX_i$ to $\CX_N$. We
denote
$$\beta=c_1+\cdots+c_N, \quad
\fb=\fb_1+\cdots+\fb_N, \quad
\sigma=\pi_0 \circ \cdots \circ \pi_{N-1}: \CX_N \to \CX_0.$$
Hence we have the decomposition
$$
\sigma^*\lb_0'(-\beta)=\lb_N'+\fb.
$$
Note that $\lb_0'(-\beta)$ is not nef any more.
But we can still have a weaker bound as follows:
\begin{eqnarray*}
\lb_0'^n
&=& (\sigma^*\lb_0')^{n-1}\cdot(\lb_{N}' + \fb + \ob(\beta)) \\
& \geq & (\sigma^*\lb_0')^{n-1} \cdot\lb_{N}'+ d_0 \beta \\
% &=& (\lb_{n}' + \fb+\ob(\beta))\cdot\lb_{n}'+d_0 \beta \\
&\geq& \lb_{N}'^n + d_0 \beta.
\end{eqnarray*}

Combine with
$$\lb^n=\lb^n_0= \lb_0(-c_0)^n+ nd_0c_0. $$
We have
\begin{equation*}
\lb^n \geq \lb_N'^n + d_0 \beta + nd_0c_0 \geq d_0 (nc_0+\beta).
\end{equation*}
The result follows.
\end{proof}

\subsection{Proofs of the main theorems}

In this section, we prove Theorem \ref{main3} and \ref{main4}.

\subsubsection*{Nef case}

Here we start to prove Theorem \ref{main3}.
To illustrate the idea, we first treat the nef case.

Assume that $\lb$ is nef (and big).
Recall that Theorem \ref{main3} asserts
$$
\hhat(\lb) \le \left(\frac {1}{n!} + \frac{(n-1) \varepsilon(\CL_K)}{d/[K:\QQ]}\right) \lb^n +
4r \log (3r).
$$
Here $d=\CL_\QQ^{n-1}$ and $r=h^0(\CL_{\QQ})$.

Apply the construction of Theorem \ref{degred} to $(\CX_0,\lb_0)=(\CX ,\lb)$.
Resume the notations of the theorem.
By Proposition \ref{onestep}, we have
\begin{eqnarray*}
\lb^n & \geq &   n \sum_{i=0}^{N} d_i c_i  , \\
\hsefhat(\lb) & \leq & \sum_{i=0}^{N} r_{i} c_i +4r_0 \log r_0+ 2r_0 \log 3.
\end{eqnarray*}
It follows that
$$
\hsefhat(\lb) - \frac{\lb^n}{n!} \le \sum_{i=0}^N \left(r_i-\frac{d_i}{(n-1)!}\right) c_i + 4r_0\log{r_0}+2r_0\log 3.
$$

The key is to apply Theorem \ref{main1}, the effective bound in the geometric case.
For any $i=0, \cdots, N$,
$$
r_i-\frac{d_i}{(n-1)!} \le (n-1) \varepsilon(\CL_K)  [K:\QQ] .
$$
It follows that
\begin{eqnarray*}
\hsefhat(\lb) - \frac{\lb^n}{n!} & \le &  (n-1) \varepsilon(\CL_K)  [K:\QQ]
\sum_{i=0}^N c_i + 4r_0\log{r_0}+2r_0\log 3.
\end{eqnarray*}

To bound $c_0+\cdots+c_N$, by Lemma \ref{sumci}, we get
$$
\sum_{i=0}^N c_i \leq \frac{1}{d_0} \lb^n.
$$
It follows that
\begin{eqnarray*}
\hsefhat(\lb) - \frac{\lb^n}{n!}
& \le & (n-1)  \varepsilon(\CL_K)  [K:\QQ] \frac {\lb^n}{d_0} + 4r_0\log{r_0} + 2r_0\log 3.
\end{eqnarray*}
Finally, by Proposition \ref{norm1},
$$
\hhat(\lb) \le \hsefhat(\lb) + r_0 \log 3.
$$
This finishes the proof.

We remark that the denominator $d_0>0$ in the current setting.
In fact, $\CL_K$ is nef and big following the assumption that $\lb$ is nef and big.
The nef part is trivial, and the big part is a result of Yuan \cite{Yu1}.

\subsubsection*{General case}

Here prove Theorem \ref{main3} in the full case (that the line bundle is big).
The major difficulty to carry the above proof is to seek a good formulation of Lemma \ref{sumci}. Our idea is to use the arithmetic Fujita approximation to overcome the difficulty.

Recall that the theorem asserts that, for any big hermitian line bundle $\mb$ on $\CX$,
$$
\hhat(\mb) \le \left(\frac {1}{n!} + \frac{(n-1) \varepsilon(\CM_K)}{\dvol(\mb)}\right) \volhat(\mb) + 4s \log (3s).
$$
Here $s=h^0(\CM_{\QQ})=[K:\QQ] h^0(\CM_K)$.

Here we deliberately switch the notation for the line bundle in consideration from $\lb$ to $\mb$, in order to accommodate the notations in Theorem \ref{degred} and afterwards.

Assume that $\hhat(\mb)>0$.
Note that $\mb$ is not necessarily nef, so our first step is to use the key decomposition to make it nef as in the geometric case.
Applying Theorem \ref{degred1} to $\mb$, we have a decomposition
$$
\pi^*\mb=\lb_0+\eb
$$
based on a birational morphism $\pi: \CX_0 \to \CX$.
Here $\eb$ is effective, $\lb_0$ is nef, and $\hhat(\mb)=\hhat(\lb_0)$.
Note the change of notations again.

Next, apply Theorem \ref{degred} to the nef bundle $\lb_0$ over $\CX_0$.
As in the theorem, we get a sequence of quadruples
$$
\{(\CX_i, \lb_i, \eb_i, c_i): \ i=0,1, \cdots, N\}.
$$
By the above argument, we still have
\begin{eqnarray*}
\hsefhat(\lb_0) - \frac{\lb_0^n}{n!} & \le &  (n-1) \varepsilon(\CL_K)  [K:\QQ]
\sum_{i=0}^N c_i + 4r_0\log{r_0}+2r_0\log 3.
\end{eqnarray*}
It is easy to see that it implies
\begin{eqnarray*}
\hsefhat(\mb) - \frac{1}{n!} \volhat(\mb) & \le &  (n-1) \varepsilon(\CM_K)  [K:\QQ]
\sum_{i=0}^N c_i + 4s\log{s}+2s\log 3.
\end{eqnarray*}

It suffices to prove
$$
\sum_{i=0}^N c_i \leq \frac{1}{[K:\QQ]\dvol(\mb)} \volhat(\mb).
$$
Note that Lemma \ref{sumci} gives
$$
\sum_{i=0}^N c_i \leq \frac{1}{d_0} \lb_0^n,
$$
which is not strong enough.
However, the result is actually true for ``bigger'' nef line bundles, and the limit will give what we need.

Resume the notations in \S \ref{section dvol}.
Let $\lb_{-1} \in \NNefhat(\CX )$ be an element such that
$$
\mb\succ \lb_{-1} \succ \lb_0.
$$
Set $c_{-1}=0$.
Add $(\lb_{-1}, c_{-1})$ to the beginning of the sequence $\{(\lb_{i}, c_{i})\}_i$.
It is easy to see that Lemma \ref{sumci} can be applied to the sequence
$$
(\lb_{-1}, c_{-1}), \ (\lb_{0}, c_{0}), \ (\lb_{1}, c_{1}), \cdots, (\lb_{N}, c_{N}).
$$
It is crucial that the lemma only involves intersection numbers (without $\hhat$).
Hence, we have
$$
\sum_{i=0}^N c_i=\sum_{i=-1}^N c_i \leq \frac{1}{(\lb_{-1,\QQ})^{n-1}} (\lb_{-1})^n.
$$
Note that elements $\lb_{-1}$ of $\NNefhat(\CX )$ satisfying $\mb\succ \lb_{-1} \succ \lb_0$ exist by \cite[Proposition 3.3]{Ch3}.
In fact, the loc. cit. implies that we can find an increasing sequence $\{ \lb_{-1,m} \}_m$, such that
$$
\lim_{m\to \infty} (\lb_{-1,m,\QQ})^{n-1}  = [K:\QQ]\dvol(\mb), \quad
\lim_{m\to \infty} (\lb_{-1,m})^{n} = \volhat(\mb).
$$
This finishes the proof of Theorem \ref{main3}.

\subsubsection*{Small case}

The proof of Theorem \ref{main4} in the nef case is very similar, except that we use Theorem \ref{main2} instead of Theorem \ref{main1} to bound $r_i$ in terms of $d_i$.
Note that we can assume every $\CX_i$ to have smooth generic fiber by a further generic resolution of singularities. We leave the details to interested readers.

\subsection{Arithmetic 3-folds}
In this section, we will prove Theorem \ref{3fold}. Let us resume the general setting. Here $\CX$ is an arithmetic 3-fold over $O_K$, and $\lb$ is a nef hermitian line bundle on $\CX$ such that
$$
\phi_{\CL_K}: \CX_K \dashrightarrow \mathbb P (H^0(\CL_K))
$$
is a generically finite rational map.

\subsubsection*{Linear series on algebraic surfaces}

Let $S$ be an algebraic surface over an algebraically closed field $k$. We always use $\kappa(S)$ to denote the Kodaira dimension of $S$.

Let $L$ be a line bundle
on $S$. Assume $h^0(L) > 1$. Hence we have the rational map
$$
\phi_L: S \dashrightarrow \mathbb P (H^0(L)).
$$
We say $\phi_L$ is \textit{generically finite} if $\dim \phi_L(S)=2$. Otherwise, $\dim \phi_L(S)=1$, and in this case we say $\phi_L$ is \textit{composed with a pencil}.

\begin{theorem} \cite[Theorem 1.2]{Sh} \label{Sh}
Assume that $\kappa(S)\geq 0$. If $L$ is nef and $\phi_L$ is generically finite, then
$$
h^0(L)\leq \frac12 L^2 +2.
$$
\end{theorem}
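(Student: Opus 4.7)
The strategy is to restrict $L$ to a general smooth curve in a base-point-free sublinear system and invoke Clifford's theorem (or Riemann--Roch in the non-special case). The condition $\kappa(S)\ge 0$ enters only through pseudo-effectivity of $\omega_S$, which yields $\omega_S\cdot L\ge 0$ since $L$ is nef.

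Let $F$ be the fixed part of $|L|$ and $M=L-F$ the moving part, so $h^{0}(L)=h^{0}(M)$ and $|M|$ has no fixed components. Resolve the codimension-two base locus of $|M|$ by a birational morphism $\pi:\tilde S\to S$, producing decompositions
$$
\pi^{*}M=\tilde M+E,\qquad \pi^{*}L=\tilde M+N,\qquad N:=E+\pi^{*}F,
$$
with $\tilde M$ base-point-free (hence nef) and $E$ effective and $\pi$-exceptional. By Bertini, a general member $\tilde C\in|\tilde M|$ is smooth and irreducible (in characteristic zero; see the obstacle below). The sequence $0\to\mathcal O_{\tilde S}\to\mathcal O_{\tilde S}(\tilde M)\to\tilde M|_{\tilde C}\to 0$ gives $h^{0}(L)=h^{0}(\tilde M)\le 1+h^{0}(\tilde M|_{\tilde C})$.

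To bound $h^{0}(\tilde M|_{\tilde C})$, the adjunction formula $\omega_{\tilde C}=(\omega_{\tilde S}+\tilde M)|_{\tilde C}$ yields $2g(\tilde C)-2=\omega_{\tilde S}\cdot\tilde M+\tilde M^{2}$. Since $\kappa(\tilde S)=\kappa(S)\ge 0$, $\omega_{\tilde S}$ is pseudo-effective, and $\tilde M$ being nef gives $\omega_{\tilde S}\cdot\tilde M\ge 0$; hence $\deg(\tilde M|_{\tilde C})=\tilde M^{2}\le 2g(\tilde C)-2$. If $h^{1}(\tilde M|_{\tilde C})>0$, Clifford's theorem gives $h^{0}(\tilde M|_{\tilde C})\le\tfrac12\tilde M^{2}+1$; otherwise Riemann--Roch on $\tilde C$ gives $h^{0}(\tilde M|_{\tilde C})=\tfrac12(\tilde M^{2}-\omega_{\tilde S}\cdot\tilde M)\le\tfrac12\tilde M^{2}$. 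Either way $h^{0}(L)\le\tfrac12\tilde M^{2}+2$. Finally $\pi_{*}\tilde M=M$ (since $E$ is $\pi$-exceptional), so by the projection formula and nefness of $\tilde M$ and $L$,
$$
\tilde M^{2}=\tilde M\cdot(\pi^{*}L-N)\le\tilde M\cdot\pi^{*}L=M\cdot L=(L-F)\cdot L\le L^{2},
$$
which yields $h^{0}(L)\le\tfrac12 L^{2}+2$.

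The main obstacle will be the Bertini step ensuring $\tilde C$ is smooth: this is classical in characteristic zero but may fail in characteristic $p>0$. A workaround is to restrict attention to a separable subsystem of $|\tilde M|$ (so the induced map to projective space has separable function-field extension, forcing a smooth general member), or to replace $\tilde C$ by its normalization $\tilde C^{\nu}$, on which the pull-back of $\tilde M|_{\tilde C}$ has degree $\le \tilde M^{2}$ and $g(\tilde C^{\nu})\le g(\tilde C)$, so Clifford/Riemann--Roch on $\tilde C^{\nu}$ give the same bound. The remaining steps---preservation of nefness and pseudo-effectivity under the blow-up $\pi$, and the chain $\tilde M^{2}\le M\cdot L\le L^{2}$---are standard projection-formula manipulations.
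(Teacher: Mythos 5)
The paper does not prove this statement; it is quoted verbatim from Shin's article \cite{Sh}, so there is no internal proof to compare against. Your argument is, however, correct and is the standard Castelnuovo-style derivation: remove the fixed part, resolve the residual base scheme, restrict to a general member of the base-point-free system, and close the estimate with Clifford's theorem or Riemann--Roch. The two points on which the hypotheses bite are exactly the ones you use: generic finiteness of $\phi_L$ gives $\tilde M^2>0$ and hence an irreducible general member by Bertini, and $\kappa(S)\ge 0$ gives pseudo-effectivity of $\omega_{\tilde S}$ (preserved under blow-up), whence $\omega_{\tilde S}\cdot\tilde M\ge 0$ for the Riemann--Roch case. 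The final inequalities $\tilde M\cdot N\ge 0$ and $F\cdot L\ge 0$ use nefness of $\tilde M$ and $L$ respectively and are both correct; $\pi_*\tilde M=M$ and the projection formula close the chain $\tilde M^2\le M\cdot L\le L^2$. Two small remarks: (i) you should dispose of $h^0(L)\le 1$ separately so that $\tilde M|_{\tilde C}$ is effective and Clifford is applicable, but this is automatic since generic finiteness forces $h^0(L)\ge 3$; (ii) the line $\tilde M^2\le 2g(\tilde C)-2$ is never used --- your split into the special and non-special cases handles it. Your concern about Bertini in characteristic $p$ is legitimate, but in this paper the only use of the statement is for surfaces $\CX_{i,K}$ over a number field, so characteristic zero suffices; your normalization workaround is also fine, with the minor correction that $\deg\nu^*(\tilde M|_{\tilde C})=\tilde M^2$ (equality, not merely $\le$), which still yields the desired bound since $g(\tilde C^\nu)\le p_a(\tilde C)$.
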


This result is cleaner than the surface case of Theorem \ref{main1}, under more assumptions.

\begin{theorem} \label{castelnuovo}
Assume that $\kappa(S) \ge 0$. Let $L$ and $M$ be line bundles on $\CX$.
Assume that $L$ is nef and $\phi_L$ is generically finite, and that $L-M$ is effective and $\phi_M$ is composed with a pencil. Then
$$
h^0(M)\leq \frac12 L M +1.
$$
If furthermore the pencil of $\phi_M$ is not elliptic or hyperelliptic, then
$$
h^0(M)\leq \frac13 L M +1.
$$
\end{theorem}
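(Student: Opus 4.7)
My plan is to bound $h^0(M)$ by reducing to a fibration coming from the pencil defining $\phi_M$, and then applying a gonality estimate on a general fiber.

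First I would take a birational morphism $\sigma\colon \tilde S \to S$ with $\tilde S$ smooth that simultaneously resolves the indeterminacies of $\phi_L$ and $\phi_M$. The Stein factorization of the extended $\phi_M$ then yields a fibration $f\colon \tilde S \to C$ with connected fibers onto a smooth curve $C$, and the moving part of $|\sigma^*M|$ takes the form $f^*D$ for some divisor $D$ on $C$. Writing $\sigma^*M = f^*D + Z$ with $Z\ge 0$ the fixed part, setting $a = \deg(D)$, and letting $F$ denote a general (smooth) fiber of $f$, I obtain
$$
h^0(M) \;=\; h^0(\sigma^*M) \;=\; h^0(C,D) \;\le\; a+1.
$$
Using nefness of $\sigma^*L$ and effectiveness of $Z$,
$$
LM \;=\; \sigma^*L\cdot \sigma^*M \;\ge\; \sigma^*L\cdot f^*D \;=\; a\,(\sigma^*L\cdot F),
$$
so $a \le LM/(\sigma^*L\cdot F)$.

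The heart of the argument is to bound $\sigma^*L\cdot F$ from below. Since $\phi_L$ is generically finite, so is $\phi_{\sigma^*L}$, and for a general $F$ the restriction $\phi_{\sigma^*L}|_F$ is finite onto its image (a curve). Hence the image of the restriction map $H^0(\tilde S,\sigma^*L) \to H^0(F,\sigma^*L|_F)$ has dimension $\ge 2$, producing on $F$ a linear series $g^r_d$ with $r\ge 1$ and $d = \sigma^*L\cdot F$; any such series contains a $g^1_d$, so $d \ge \mathrm{gon}(F)$. Independently, since $\kappa(\tilde S) = \kappa(S)\ge 0$, the canonical divisor $K_{\tilde S}$ is pseudo-effective, and as $F$ is nef we get $K_{\tilde S}\cdot F \ge 0$; adjunction $2g(F)-2 = K_{\tilde S}\cdot F$ then forces $g(F)\ge 1$.

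Since every smooth curve of genus $\ge 1$ has gonality $\ge 2$, we conclude $\sigma^*L\cdot F\ge 2$, which combined with the previous step yields $h^0(M)\le LM/2 + 1$. For the sharper bound, when the pencil is neither elliptic nor hyperelliptic, $g(F)\ne 1$ and $F$ carries no $g^1_2$; as every genus-two curve is hyperelliptic, this forces $g(F)\ge 3$ and $F$ non-hyperelliptic, so $\mathrm{gon}(F)\ge 3$, and the same chain of inequalities gives $h^0(M)\le LM/3 + 1$. The principal obstacle is the birational bookkeeping: choosing $\tilde S$ so that $\phi_L$ and $\phi_M$ both become morphisms, ensuring a general $F$ is smooth and avoids their exceptional loci, and checking that nefness of $L$ and pseudo-effectivity of $K_S$ ascend correctly to $\tilde S$; the hypothesis that $L-M$ is effective serves to keep $L$ and the pencil defining $\phi_M$ compatibly placed on this common resolution.
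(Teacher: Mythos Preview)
Your argument is correct and follows essentially the same route as the paper's proof: pass to a common resolution, write the moving part of $|M|$ as a multiple $aF$ of a general fiber of the pencil, use $h^0(M)\le a+1$ and $LM\ge a\,(L\cdot F)$, and then bound $L\cdot F$ from below by $2$ (respectively $3$) via the geometry of the fiber. The only cosmetic differences are that the paper phrases the lower bound on $L\cdot F$ as ``$L$ is base-point-free on the non-rational curve $F$'' (invoking that $\kappa(S)\ge 0$ rules out a ruled surface) rather than through gonality, and it does not explicitly pass through the Stein factorization; your adjunction argument $2g(F)-2=K_{\tilde S}\cdot F\ge 0$ is a clean alternative to the ``not birationally ruled'' step.
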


\begin{proof}
By the basic construction in the geometric case, after blowing-up, we can assume that $\phi_L$ and $\phi_M$ are actually morphisms.
Since $\phi_M$ is composed with a pencil, we can write
$$
M \sim_{\rm num} a F,
$$
where $F$ is a general member of the pencil and $a \ge h^0(M) - 1$. We have
$$
LM = aLF + LZ \ge (h^0(M) - 1) LF.
$$
Because $S$ is not birationally ruled, $F$ is not rational.  Hence $LF \ge 2$ since $L$ is base-point-free on $F$. Moreover, $LF \ge 3$ if $F$ is not elliptic or hyperelliptic.
\end{proof}

\subsubsection*{Proof of Theorem \ref{3fold}}
Now we proof Theorem \ref{3fold}.
Apply Theorem \ref{degred} to $(\CX_0,\lb_0)=(\CX ,\lb)$.
Resume the notations in the theorem. Then we have the quadruples
$$
(\CX_i, \lb_i, \eb_i, c_i), \quad i=0, \cdots, N.
$$

We first analyze the proof of Theorem \ref{main3}.
Proposition \ref{onestep} in this case gives
$$
\hsefhat(\lb) - \frac{\lb^3}{6} \le \sum_{i=0}^N \left(r_i-\frac{1}{2}d_i\right) c_i + 4r_0\log{r_0}+2r_0\log 3.
$$
If $\phi_{\CL_{i,K}}$ is generically finite, then Theorem \ref{Sh} gives
$$
r_i-\frac{1}{2}d_i \leq 2,
$$
which is exactly what we need in the proof.
However, this inequality fails if $\phi_{\CL_{i,K}}$ is not generically finite, in which case $d_i=0$. So we need to bound $r_i$ in this case by a different method.

Since $\phi_{\CL_K}$ is generically finite, we can find the biggest $j \in \{0,1,\cdots, N\}$ such that $\phi_{\CL_{j,K}}$ is generically finite. Then $\phi_{\CL_{i,K}}$ is not generically finite for $i=j+1,\cdots, N$. We will bound $r_i$ by a variant of $d_i$ for such $i$.

For any $i = j,j+1,\cdots, N$, denote
$$
d'_i = \mathcal \CL_{j, \QQ} \cdot \CL_{i, \QQ}.
$$
Here by abuse of notation, the right-hand side denotes the intersection of
$\CL_{i, \QQ}$ with the pull-back of $\CL_{j, \QQ}$ to $\CX_{i,\QQ}$.
In the following, we still use this method to simplify our notations.

\begin{prop} \label{onestep1}
We have
$$
\lb'^3_j \ge 2 \sum_{i=j+1}^{N} d'_i c_i.
$$
\end{prop}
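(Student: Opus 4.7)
The plan is to establish the inequality by telescoping, mimicking the proof of Proposition \ref{onestep} but with one factor of (the pullback of) $\lb_j'$ inserted into every triple intersection, so that on the generic fiber it produces $d'_{i+1}$ instead of $d_{i+1}$. Write $\sigma_{i,j}: \CX_i \to \CX_j$ for the composition $\pi_j\circ\pi_{j+1}\circ\cdots\circ \pi_{i-1}$ of blow-ups, so that $\sigma_{j,j}=\mathrm{id}$ and $\sigma_{i+1,j} = \sigma_{i,j}\circ \pi_i$. Set
$$
T_i := (\sigma_{i,j}^* \lb_j') \cdot (\lb_i')^2 \quad \text{on } \CX_i, \qquad i = j, j+1, \ldots, N.
$$
Then $T_j = \lb_j'^3$, and $T_N \geq 0$ because $\sigma_{N,j}^*\lb_j'$ and $\lb_N'$ are nef and triple products of nef hermitian line bundles are nonnegative. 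It therefore suffices to prove the one-step inequality $T_i - T_{i+1} \geq 2 d'_{i+1} c_{i+1}$ for $i = j, \ldots, N-1$ and sum.

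For the step inequality, pull everything back to $\CX_{i+1}$ via $\pi_i$; by birational invariance of arithmetic intersection numbers, $T_i = (\sigma_{i+1,j}^* \lb_j') \cdot (\pi_i^*\lb_i')^2$ on $\CX_{i+1}$. Apply the key decomposition $\pi_i^*\lb_i' = \lb_{i+1}' + \eb_{i+1} + \ob(c_{i+1})$ from Theorem \ref{degred1} and factor the difference of squares:
$$
T_i - T_{i+1} = (\sigma_{i+1,j}^* \lb_j') \cdot (\pi_i^* \lb_i' + \lb_{i+1}') \cdot \bigl(\eb_{i+1} + \ob(c_{i+1})\bigr).
$$
All three factors in the triple product on the right are either nef or effective, so the contribution from the effective summand $\eb_{i+1}$ is nonnegative and may be dropped, giving
$$
T_i - T_{i+1} \;\geq\; (\sigma_{i+1,j}^* \lb_j') \cdot (\pi_i^* \lb_i' + \lb_{i+1}') \cdot \ob(c_{i+1}).
$$

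Finally, the intersection against $\ob(c_{i+1})$ is purely archimedean and equals $c_{i+1}$ times the degree of the remaining two factors on the generic fiber $\CX_{i+1,\QQ}$. Using $\pi_i^* \CL_{i,\QQ} = \CL_{i+1,\QQ} + \CE_{i+1,\QQ}$ with $\CE_{i+1,\QQ}$ effective, one computes
$$
\sigma_{i+1,j}^* \CL_{j,\QQ} \cdot (\pi_i^* \CL_{i,\QQ} + \CL_{i+1,\QQ}) = 2 d'_{i+1} + \sigma_{i+1,j}^* \CL_{j,\QQ} \cdot \CE_{i+1,\QQ} \geq 2 d'_{i+1},
$$
since $\CL_{j,\QQ}$ is nef and $\CE_{i+1,\QQ}$ is effective. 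This yields the step inequality, and summing over $i = j, \ldots, N-1$ completes the argument. The only real subtlety is the bookkeeping of which arithmetic variety each factor lives on, together with routine invocations of birational invariance and positivity of arithmetic intersection products; no new positivity input is needed beyond what was used in the proofs of Proposition \ref{onestep} and Lemma \ref{sumci}.
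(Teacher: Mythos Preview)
Your proof is correct and is essentially the same telescoping argument as the paper's: both define $T_i=\lb_j'\cdot(\lb_i')^2$ (with the first factor pulled back) and prove $T_i-T_{i+1}\ge 2d'_{i+1}c_{i+1}$ using the decomposition $\pi_i^*\lb_i'=\lb_{i+1}'+\eb_{i+1}+\ob(c_{i+1})$ together with nef\,$\cdot$\,nef\,$\cdot$\,effective $\ge 0$. The only cosmetic difference is that the paper drops $\eb_{i+1}$ to pass to $\lb_j'\cdot(\lb_{i+1}'+\ob(c_{i+1}))^2$ in one step, whereas you factor $(\pi_i^*\lb_i')^2-(\lb_{i+1}')^2$ explicitly and then handle the generic-fiber term; the underlying positivity inputs are identical.
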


\begin{proof}
For any $i \ge j$, recall the decomposition
$$
\pi^*_i \lb_i' = \lb_{i+1}  + \ob(c_{i+1})= \lb'_{i+1} + \eb_{i+1} + \ob(c_{i+1}).
$$
Hence,
\begin{eqnarray*}
\lb_j'\cdot \lb'^2_i
 \ge  \lb_j' \cdot (\lb'_{i+1} + \ob(c_{i+1}))^2
=  \lb_j' \cdot \lb_{i+1}'^2 + 2 d'_{i+1}c_{i+1}.
\end{eqnarray*}
Summing over $i=j, \cdots, N-1$, one finishes the proof.
\end{proof}

Now we are ready to finish the proof.
By Proposition \ref{onestep},
\begin{eqnarray*}
\lb_0^3 & \geq &  \lb_j'^3 + 3 \sum_{i=0}^{j} d_i c_i,  \\
\hsefhat(\lb_0) &\leq&
\sum_{i=0}^{N} r_{i} c_i +4r_0 \log r_0+ 2r_0 \log 3.
\end{eqnarray*}
Note that the first inequality concerns the filtration from $0$ to $j$, while the second inequality concerns the filtration from $0$ to $N$.

By Proposition \ref{onestep1}, the first inequality implies
\begin{eqnarray*}
\lb_0^3 & \geq &  3 \sum_{i=0}^{j} d_i c_i+ 2 \sum_{i=j+1}^{N} d'_i c_i.
\end{eqnarray*}
Then the difference gives
$$
\hsefhat(\lb_0) - \frac16 {\lb_0^3} \le
\sum_{i=0}^{j} \left(r_{i} - \frac 12 d_i \right) c_i
+\sum_{i=j+1}^{N} \left(r_{i} - \frac 13 d'_i \right) c_i + 4r_0 \log r_0+ 2r_0 \log 3.
$$

By assumption,  $\kappa(\CX_K)\geq 0$ and $\CX_K$ has no elliptic or hyperelliptic pencil. Theorem \ref{Sh} and Theorem \ref{castelnuovo} give
\begin{eqnarray*}
r_{i} - \frac 12 d_i &\le&  2[K: \QQ], \quad i=0,\cdots, j,\\
r_{i} - \frac 13 d'_i &\le&  [K: \QQ], \quad i=j+1,\cdots, N.
\end{eqnarray*}
Hence,
\begin{eqnarray*}
\hsefhat(\lb) - \frac{1}{6}\lb^3
& \le & 2 [K: \QQ] \sum_{i=0}^N c_i + 4 r_0 \log r_0 + 2r_0 \log 3.
\end{eqnarray*}
Apply Lemma \ref{sumci} again. We have
\begin{eqnarray*}
\hsefhat(\lb) - \frac{1}{6}\lb^3
&\leq& \frac {2}{d_K} \lb^n + 4 r_0 \log r_0 + 2 r_0 \log 3.
\end{eqnarray*}
Combining with
$$
\hhat(\lb) \le \hsefhat(\lb) + r_0 \log 3,
$$
the proof is complete.

\end{document}